\newtheorem{thm}{Theorem}[section]
\newtheorem{lmm}[thm]{Lemma}
\theoremstyle{definition}
\newtheorem{rem}{Remark}[section]
\begin{document}

\title{An Iterative Approach for Time Integration Based on Discontinuous Galerkin Methods}%
\author{Xiaozhou Li\footnotemark[1]~\footnotemark[2]
  \and Pietro Benedusi\footnotemark[1]
\and Rolf Krause\footnotemark[1]}

\renewcommand{\thefootnote}{\fnsymbol{footnote}}
\footnotetext[1]{Institute of Computational Science, Universit{\`a} della Svizzera italiana, Lugano, Switzerland. (\{xiaozhou.li, pietro.benedusi, rolf.krause\}@usi.ch)}
\footnotetext[2]{Corresponding author.}
\date{\today}

\maketitle

\begin{abstract}
 We present a new class of iterative schemes for solving initial value problems (IVP) based on discontinuous Galerkin (DG) methods.  Starting from the weak DG formulation of an IVP, we derive a new iterative method based on a preconditioned Picard iteration.  Using this approach, we can systematically construct explicit, implicit and semi-implicit schemes with arbitrary order of accuracy.  We also show that the same schemes can be constructed by solving a series of correction equations based on the DG weak formulation.  The accuracy of the schemes is proven to be $\min\{2p+1, K+1\}$ with $p$ the degree of the DG polynomial basis and $K$ the number of iterations.  The stability is explored numerically; we show that the implicit schemes are $A$-stable at least for $0 \leq p \leq 9$.  Furthermore, we combine the methods with a multilevel strategy to accelerate their convergence speed.  The new multilevel scheme is intended to provide a flexible framework for high order space-time discretizations and to be coupled with space-time multigrid techniques for solving partial differential equations (PDEs).  We present numerical examples for ODEs and PDEs to analyze the performance of the new methods.   
Moreover, the newly proposed class of methods, due to its structure, is also a competitive and promising candidate for parallel in time algorithms such as Parareal, PFASST, multigrid in time, etc.
  %The $hp$-adaptive strategies and discuss the $hp$-adaptive strategies.
  %high-order accuracy and the flexibility new structure of dealing ODEs contain discontinuities of the proposed methods.

\bigskip
\noindent \textbf{Keywords.} initial value problem, time discretization, discontinuous Galerkin method, superconvergence, high-order method, iterative method,  deferred correction method, multigrid in time, parallel in time
\end{abstract}

\section{Introduction} 
The construction of efficient, stable and high-order numerical methods for the solution of initial value problems governed by ordinary differential equations has been studied extensively in past decades.  Existing methods for such problems can be classified, roughly speaking, into two groups.  The first group consists of discretization schemes based on the strong differentiation/integration formula of the initial value problem, which includes Runge-Kutta methods~\cite{Butcher:1987}, linear multi-step methods, and spectral deferred correction (SDC)~\cite{Dutt:2000}. This first group, in many respects, has become a mature subject and a dominant approach for solving both non-stiff and stiff problems~\cite{Hairer:1993, Hairer:1996}.  Furthermore, many techniques have been developed such as semi-implicit schemes~\cite{Ascher:1995, Kennedy:2003, Minion:2003} and the parallel in time algorithms~\cite{Ketcheson:2014,Lions:2001, Emmett:2012}. In particular the development of parallel in time algorithms is now a growing field. In fact time-parallelism allows to extend the scalability of a software using domain decomposition in the time direction aside from the space one \cite{Gander2015_Review,MinionEtAl2015}.

The second group consists of methods based on the weak Galerkin formulation of the initial value problem.  The earliest developments of Galerkin approaches have been introduced by Argyris and Scharpf~\cite{Argyris:1969}, Fried~\cite{Fried:1969}, and Hulme~\cite{Hulme:1972:1, Hulme:1972:2} with continuous finite element methods more than 40 years ago.  After that, the continuous finite element methods as time discretizations have been intensively studied by many authors, for example by Betsch and Steinmann in their series of work~\cite{Betsch:2000, Betsch:2001, Betsch:2002, Gross:2005}.

The focus of this paper is the DG method, which falls from the second group.  The first analysis of the DG method applied to ODEs was done by Lesaint and Raviart~\cite{Lesaint:1974} in 1974, right after the introduction of the DG methods in 1973 by Reed and Hill~\cite{Reed:1973}.  In~\cite{Lesaint:1974}, Lesaint and Raviart showed that the DG method with polynomials of degree $p$ is A-stable of order $2p+1$ at the mesh points, and proved the results for linear cases.  This property of ``order $2p+1$ at the mesh points'' was later called the superconvergence property.  A rigorous proof for non-linear cases was given by Adjerid, Devine, Flaherty and Krivodonova~\cite{Adjerid:2002} in 2002.  In the extension of the standard DG approach, an $\alpha$-averaging DG method for ODEs was introduced by Delfour, Hager and Trochu~\cite{Delfour:1981}; they approximated the solution $u$ at time $t_n$ by taking the average of the jump: $u(t_n) \approx \alpha_n u_h(t_n^{-}) + (1 - \alpha_n) u_h(t_n^{+})$.  For piecewise constant approximations, the values $\alpha_n = 0, \frac{1}{2}, 1$ correspond, respectively, to Euler's explicit, improved, and implicit scheme.  

Since then, many authors have studied the derivation of time discretization schemes based on DG approaches.  Examples are one-step methods, such as implicit Runge-Kutta method, and multistep methods, such as Adams-Bashforth and Adams-Moulton schemes, see~\cite{Delfour:1986, Bottasso:1997, Estep:2002, Zhao:2014}.  Although DG methods have attractive features, such as excellent stability property (A-stability) and high-order accuracy  (superconvergence) for solving initial value problems, there are still notable challenges:
\begin{itemize}
    \item for nonlinear ODEs, Galerkin approaches lead to nonlinear systems of equations, which usually are not trivial to solve.  Especially, for the time discretization of nonlinear PDEs, methods for their solutions tend to be rather difficult to code and computationally expensive.
    \item The fully implicit nature of Galerkin schemes makes them less efficient and attractive for non-stiff problems compared to the classical time integration schemes.
    \item Explicit-implicit discretizations, which are capable of treating the non-stiff terms explicitly and the stiff term implicitly, have not been developed yet.
    \item In the context of time discretizations, the now popular
     parallel in-time-algorithms are not yet used in combination with the Galerkin approach.
\end{itemize}

The above points are the main reason why the DG time stepping method is not as widely used, in the sense of applications and publications, as classic time stepping methods such as Runge-Kutta methods.  Besides its usage as a time stepping method, it is worth to mention that the DG method has been used as time discretization of space-time finite element methods, cf.~\cite{Jamet:1978, Eriksson:1985, Eriksson:1987, Eriksson:1991, Eriksson:1995:1, Eriksson:1995:2, Eriksson:1995:3}.  However, also in the context of space-time discretization, one has to face the above challenges.  For example, in~\cite{Klaij:2006} the authors coupled the space-time discretization with additional pseudo-time stepping methods to generate explicit schemes and deal the nonlinearity.  

In order to address these challenges, here we derive a new class of time stepping schemes based on the standard DG time stepping method introduced by Lesaint and Raviart~\cite{Lesaint:1974} in 1974.  We start from the standard weak DG formulation, where the DG approximation $u_h$ is constructed in the nodal form~\cite{Hesthaven:2008} based on the right Gau\ss-Radau points.  The solution $u$ at $t_n$ is approximated by taking the upwind flux: $u(t_n) = u_h(t_n^{-})$.  
%To simplify the treatment of nonlinear ordinary differential equations, we first approximate the integration of the nonlinear term in equation~\eqby Gau\ss~quadrature.  
Furthermore, to avoid the trouble of solving the fully implicit nonlinear system generated by the weak Galerkin formulation, we introduce different iterative methods, which lead to explicit, implicit and semi-implicit schemes.  
%Also, the semi-implicit schemes can be created by combining explicit and implicit schemes.  
%Also, we demonstrate that the same iterative schemes can be obtained by solving a series of correction equation based on the DG weak formulation with piecewise constant basis functions.   

The theoretical analysis is also given; the new iterative schemes have accuracy order of $\min\{2p+1, K+1\}$ with $p$ the degree of polynomial basis and $K$ the number of iterations.  The new schemes preserve the superconvergence property of the DG method~\cite{Adjerid:2002}.  Also, their stability is explored numerically; the implicit schemes are demonstrated to be $A$-stable at least for $0 \leq p \leq 9$.  Our proposed schemes are intended to:
\begin{itemize}
    \item be combined with the method of lines approach to yield a flexible framework for high order space-time methods for partial differential equations;
    \item be combined with the spatial multigrid methods to generate a framework for space-time multigrid methods;
    \item be combined with the parallel in time algorithms, such as Parareal, PFASST, etc. 
\end{itemize}
Therefore, a general multilevel strategy based on the full approximation scheme (FAS) and basic adaptive strategies are presented in this paper as starting points for future developments.  

This paper is organized as follows. In Section 2, we review the DG method for ODEs and its relevant properties.  In Section 3, we derive a new class of iterative time stepping schemes based on the DG methods and the error estimates are presented.  Furthermore, a general multilevel strategy based on full approximation schemes is introduced in Section 4 together with a brief description of adaptive strategies.  Numerical examples are given in Section 5. The conclusions are presented in Section 6.

\section{Background}  
In this section, we briefly review the weak formulation and necessary properties of the standard DG methods for the initial value problem: 
\begin{equation}  
    \label{eq:ivp}
       \left\{ \begin{array}{l} 
         u_t  = f(t, u(t)), \,\, t \in [0, T] \\
         u(0) = u_0
       \end{array}\right.. 
\end{equation}
For convenience, here we consider $u_0 \in \mathbb{R}$, $u: \mathbb{R} \mapsto \mathbb{R} $ and $f: \mathbb{R}\times\mathbb{R} \mapsto \mathbb{R}$.  Systems of ODEs can be addressed in a similar way.

To derive the DG weak formulation, we divide the time interval $[0, T]$ into $N$ subintervals by means of the partition $0 = t_0 < t_1 < \cdots < t_n < \cdots < t_N = T$.  Let $I_n = [t_n, t_{n+1}]$, $\Delta t_n = t_{n+1} - t_n$, $h = \max\limits_n\Delta t_n$. We denote by $u_n \approx u(t_n)$ the respective approximation for $u$ at $t_n$ resulting from the DG schemes described below. 

The DG approximation space is given by 
\begin{equation}
  \label{eq:space}
    V^p_h = \left\{v_h: v_h\big\vert_{I_n} \in \mathbb{P}^p(I_n)\right\},
\end{equation}
where $\mathbb{P}^p$ denote the space of all polynomials of degree $\leq p$.
On each interval $I_n$, we construct the DG approximation $u_h$ in the nodal form,
\begin{equation} 
  \label{eq:uh}
  u_h(t) = \sum\limits_{m = 0}^p u_{n,m}\ell_{n,m}(t), \qquad t \in [t_n, t_{n+1}],
\end{equation}
where $\left\{\ell_{n,m}(t)\right\}$ is the basis of Lagrange polynomials of degree $p$ with the right Gau\ss-Radau points ${\left\{t_{n,m}\right\}}_{m=0\ldots p}$ (see the following notes) over the interval $I_n$.  Due to the discontinuous nature of this approach, at the mesh point $t_{n}$, the DG approximation $u_h$ has two values: the limiting values from the left $u_h(t_{n}^{-})$ and from the right $u_h(t_{n}^{+})$ (see Figure~\ref{fig:grid}), which in general will be different. 

\begin{figure}[!ht] 
    \centering
    \includegraphics[width=.8\textwidth]{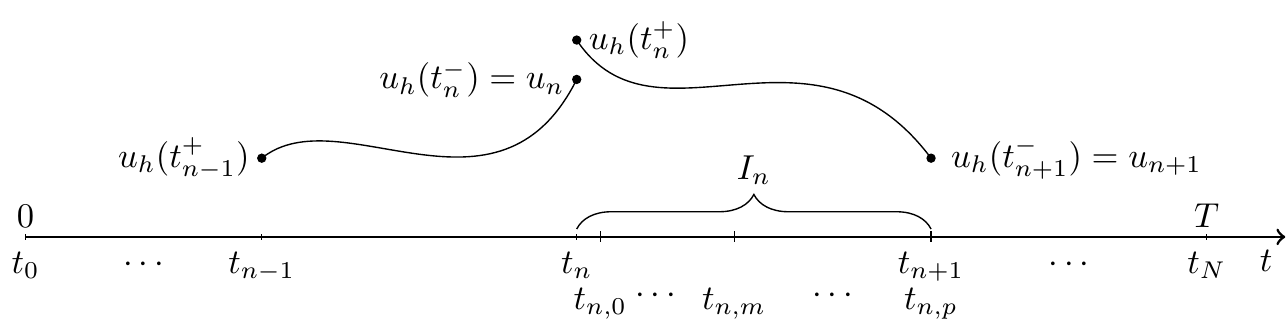}
    \begin{center}
    \caption{\label{fig:grid}
    The discretization of the time axis and the corresponding notation is shown.  Also the Gau\ss-Radau points $t_{n,m}$ are depicted for the interval $I_n$.   
}
     \end{center}
\end{figure}

In this paper, as in the original work of Lesaint and Raviart~\cite{Lesaint:1974}, the numerical approximation for $u(t_{n+1})$ is given by
\begin{equation}
  \label{eq:un}
  u_{n+1} = u_h(t_{n+1}^{-}) = \sum\limits_{m = 0}^p u_{n,m}\ell_{n,m}(t_{n+1}) = u_{n,p}.
\end{equation}
Here, we note:
\begin{itemize}
  \item The choice of the nodal form~\eqref{eq:uh} is convenient for quadrature purpose and crucial for the further derivations in this paper.  For more details about the nodal DG method and its comparison with modal DG method, we refer to Hesthaven and Warburton~\cite{Hesthaven:2008}.
  \item The Gau\ss-Radau points are the quadrature points of the Gau\ss-Radau quadrature, which requires $p+1$ points and reproduces exactly all polynomials of degree $2p$, see~\cite{Handbook:1992}.  Here, ``right'' Gau\ss-Radau points means that the right endpoint is included in the $p+1$ points ${\left\{t_{n,m}\right\}}_{m=0\ldots p}$,  
    \[
      t_n < t_{n,0} < t_{n,1} < \cdots < t_{n,p} = t_{n+1}.
    \]
    An explanation for this choice will be provided in Section 3.
 
\end{itemize}

Once the approximation space has been chosen, as usual, we multiply with test functions, $v_h \in V_h^p$,  on both sides of~\eqref{eq:ivp}, and integrate by parts:
\begin{equation}  
    \label{eq:weak}
    -\int_{t_n}^{t_{n+1}} u_h {(v_h)}_t \,dt + u_h(t_{n+1}^{-})v_h(t_{n+1}^{-}) - u_h(t_{n}^{- })v_h(t_{n}^{+}) = \int_{t_n}^{t_{n+1}} f(t,u_h(t))v_h \,dt, 
\end{equation}
where $u_h(t_{n+1}^{-})$ is the upwind flux and $u_h(t_n^{-})$ ($=u_n$) is passed from the previous interval $I_{n-1}$ as the initial value.

By inserting the nodal representation of $u_h$ we obtain, for $0 \leq j \leq p$, 
\[
    -\sum\limits_{m=0}^p u_{n,m}\int_{t_n}^{t_{n+1}} \ell_m(t)\ell'_j(t)\, dt + u_{n,p}\delta_{jp} - u_h(t_{n}^{-})\delta_{j0} = \int_{t_n}^{t_{n+1}}f(t,u_h(t))\ell_j(t)\,dt.
\]
For simplicity we consider in first place the linear case, $f(t,u_h(t)) = \lambda u_h(t)$, then, by transformation to the reference interval $[-1,1]$ we get
\begin{equation}    
    \label{eq:1}
    L U + \lambda\frac{\Delta t_n}{2} M U + B = 0, 
\end{equation}
with 
\begin{equation} 
  \label{eq:LM}
  L_{i,j} = \int_{-1}^{1} \ell'_i(t)\ell_j(t)\,dt - \delta_{ip}\delta_{jp}, \qquad M_{i,j} = \int_{-1}^{1} \ell_i(t)\ell_j(t) \,dt,
\end{equation}
where $\{\ell_i(t)\}$ are the Lagrange polynomials of degree $p$ on the reference interval $[-1,1]$, and
\[
  U = {[u_{n,0}, u_{n,1}, \ldots, u_{n,p}]}^T,\qquad B = {u_h(t_n^{-})\left[\ell_0(-1), \ell_1(-1), \ldots, \ell_p(-1)\right]}^T.
\]
By solving system~\eqref{eq:1}, we have the standard discontinuous Galerkin approximation of the test equation,
\[
  \left\{ \begin{array}{l}
    U = - {\left(L + \frac{\lambda\Delta t_n}{2} M\right)}^{-1}B \\
    u_{n+1} = u_{n,p}
  \end{array}\right..
\]
Extensive results on the properties of DG methods are available in the literature, see~\cite{Lesaint:1974, Delfour:1981, Delfour:1986, Johnson:1986, Richter:1988}.  Before presenting our new method, let us first collect some properties of DG methods, which will be useful later on.  
\begin{lmm}\label{lmm:stable} (Stability.)
  The DG method~\eqref{eq:weak} for IVP~\eqref{eq:ivp} is $A$-stable. 
\end{lmm}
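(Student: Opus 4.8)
The plan is a standard energy argument on the weak form \eqref{eq:weak}. Since $A$-stability is a statement about the scalar linear test equation $u_t=\lambda u$ with $\mathrm{Re}(\lambda)\le 0$, it suffices to work on a single interval $I_n$ with $f(t,u_h)=\lambda u_h$ and to show that the one-step map has amplification factor of modulus at most one, that is $|u_{n+1}|\le|u_n|$; here $u_{n+1}=u_{n,p}=u_h(t_{n+1}^-)$ by \eqref{eq:un} and $u_n=u_h(t_n^-)$ is the inflow datum fed into \eqref{eq:weak}.

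First I would take $v_h=\bar u_h\in V_h^p$ in \eqref{eq:weak} and use the identity $\mathrm{Re}\int_{I_n}u_h\,\overline{(u_h)_t}\,dt=\tfrac12\big(|u_h(t_{n+1}^-)|^2-|u_h(t_n^+)|^2\big)$. Taking real parts and using $\mathrm{Re}(\lambda)\int_{I_n}|u_h|^2\,dt\le 0$ leaves
\begin{equation*}
  \tfrac12|u_h(t_{n+1}^-)|^2+\tfrac12|u_h(t_n^+)|^2-\mathrm{Re}\!\left(u_h(t_n^-)\,\overline{u_h(t_n^+)}\right)\le 0 .
\end{equation*}
Completing the square in the two boundary contributions rewrites the left-hand side as $\tfrac12|u_{n+1}|^2+\tfrac12|u_h(t_n^+)-u_n|^2-\tfrac12|u_n|^2$, which immediately gives $|u_{n+1}|^2\le|u_n|^2-|u_h(t_n^+)-u_n|^2\le|u_n|^2$, i.e.\ $A$-stability.

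The same bound can also be read off the algebraic system \eqref{eq:1}. Integrating by parts in the entries of \eqref{eq:LM}, and using that $t=1$ is the only quadrature node lying on the boundary of the reference interval, gives $L+L^{T}=-e_pe_p^{T}-rr^{T}$ with $r=(\ell_0(-1),\dots,\ell_p(-1))^{T}$, so $L+L^{T}$ is negative semidefinite, while $M$ is a positive definite Gram matrix. Left-multiplying $(L+zM)U=-B$ (with $z=\lambda\Delta t_n/2$ and $B=u_nr$) by $U^{*}$ and taking real parts then reproduces the inequality above, since $e_p^{T}U=u_{n+1}$ and $r^{T}U=u_h(t_n^+)$; taking $B=0$ in the same computation also shows $L+zM$ is invertible for $\mathrm{Re}(z)\le 0$, so the amplification factor is well defined.

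Almost everything above is routine bookkeeping. The two points I expect to need care are: keeping track of the jump terms and carrying out the completion of the square correctly with complex-valued $u_h$; and the degenerate case $\mathrm{Re}(\lambda)=0$ in the invertibility claim, where the energy identity only forces $u_h(t_n^+)=u_h(t_{n+1}^-)=0$, so one additionally has to test \eqref{eq:weak} against a general $v_h\in V_h^p$, reduce it to $(u_h)_t=\lambda u_h$ on $I_n$, and conclude $u_h\equiv0$. Alternatively, for this last point one may simply invoke the classical analysis of Lesaint and Raviart~\cite{Lesaint:1974}.
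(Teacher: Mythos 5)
Your argument is correct, but it is worth noting that the paper does not actually prove this lemma: it simply cites Theorem 2 of Lesaint and Raviart, whose classical argument identifies the amplification factor of the degree-$p$ DG scheme with a rational (Pad\'e-type) approximation of $e^{z}$ and deduces $A$-stability (indeed $L$-stability) from properties of that rational function. What you give instead is the standard self-contained energy/dissipativity proof for upwind-in-time DG: testing with $\bar u_h$, taking real parts, and completing the square in the boundary terms yields
\[
\tfrac12|u_{n+1}|^{2}\;\le\;\tfrac12|u_n|^{2}-\tfrac12\big|u_h(t_n^{+})-u_n\big|^{2}-|\mathrm{Re}\,\lambda|\int_{I_n}|u_h|^{2}\,dt ,
\]
and your algebraic restatement is consistent with the paper's matrices: $L+L^{T}=-e_pe_p^{T}-rr^{T}$ follows from $\ell_i(1)=\delta_{ip}$ (this is exactly where the inclusion of the right endpoint among the Gau\ss--Radau nodes is used), $M$ is a positive definite Gram matrix, and your treatment of invertibility for $\mathrm{Re}(z)\le 0$, including the borderline case $\mathrm{Re}(z)=0$ via testing against all $v_h$, is sound. (Note that $B=u_n r$ is the correct right-hand side; the $\delta_{j0}$ appearing in the paper's componentwise display is a typo.) The trade-off is the usual one: your energy proof is elementary, uniform in $p$, exposes the extra jump dissipation $|u_h(t_n^{+})-u_n|^{2}$, and extends directly to dissipative nonlinear problems, but it only delivers $|Am(\lambda)|\le 1$; the cited Lesaint--Raviart route requires the explicit stability function but gives the sharper information $|Am(\lambda)|\to 0$ as $\mathrm{Re}\,\lambda\to-\infty$, which is relevant to the paper's later discussion of $L$-stability.
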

\begin{proof}
  See Theorem 2 in Lesaint and Raviart~\cite{Lesaint:1974}.
\end{proof}

\begin{lmm}\label{lmm:super} (Superconvergence.)
    Denote by $u_h$ the DG approximation of degree $p$ for the IVP~\eqref{eq:ivp} ($f(t) \in \mathcal{C}^{2p+1}$) on the interval $I_n = [t_n, t_{n+1}]$, and ${\left\{t_{n,m}\right\}}_{m=0\ldots p}$ the $(p+1)$ Gau\ss-Radau points on $I_n$.  Then we have the local truncation error:
    \[
        u(t_{n,m}) - u_h(t_{n,m}) = \mathcal{O}\left(h^{p+2}\right), \quad 0 \leq m \leq p-1.
    \]
    and at the end point $u_h(t_{n+1}^{-}) = u_h(t_{n,p})$,
    \[
        u(t_{n+1}) - u_h(t_{n+1}^{-}) = \mathcal{O}\left(h^{2p+2}\right).
    \]
\end{lmm}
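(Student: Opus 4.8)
The plan is to combine Galerkin orthogonality with a local duality argument, exploiting that the $p+1$ right Gau\ss-Radau nodes make the associated quadrature exact for polynomials of degree $2p$. I work on a single interval $I_n$ and estimate the \emph{local} error, i.e.\ I take exact data $u_h(t_n^-)=u(t_n)$; the stated bounds are the one-step errors (for $h$ small the DG system on $I_n$ has a unique solution by a contraction argument). Subtracting the weak form~\eqref{eq:weak} satisfied by $u_h$ from the same identity satisfied by the exact solution $u$ (the $t_n^-$ terms cancel by the choice of data), and writing $e=u-u_h$, gives the Galerkin orthogonality
\[
  -\int_{I_n} e\,v_h'\,dt + e(t_{n+1}^-)\,v_h(t_{n+1}^-) \;=\; \int_{I_n}\!\big(f(t,u)-f(t,u_h)\big)v_h\,dt,\qquad \forall\,v_h\in\mathbb{P}^p(I_n).
\]
I then linearize, $f(t,u)-f(t,u_h)=a(t)\,e$ with $a(t)=\int_0^1 f_u\big(t,\,u_h+s\,e\big)\,ds$ (an exact identity by the fundamental theorem of calculus; $a\equiv\lambda$ in the linear case $f=\lambda u$), and treat the right-hand side as $\int_{I_n} a\,e\,v_h\,dt$.

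First I would establish the crude bounds $\|e\|_{L^\infty(I_n)}=\mathcal{O}(h^{p+1})$ and $\|e'\|_{L^\infty(I_n)}=\mathcal{O}(h^{p})$ by comparing $u_h$ with the interpolant $\Pi_p u\in\mathbb{P}^p(I_n)$ of $u$ at the $p+1$ right Gau\ss-Radau nodes: $\Pi_p u-u_h$ solves the DG equations on $I_n$ with a residual controlled by the interpolation error, and $A$-stability (Lemma~\ref{lmm:stable}) together with an inverse inequality on $\mathbb{P}^p$ yields the bounds. For the interior-node claim I would sharpen this to $\|\Pi_p u-u_h\|_{L^\infty(I_n)}=\mathcal{O}(h^{p+2})$: in the residual of $\Pi_p u$ the term $\int_{I_n}(u-\Pi_p u)\,v_h'\,dt$ has $v_h'\in\mathbb{P}^{p-1}$, and since $u-\Pi_p u$ vanishes at the $p+1$ Radau nodes, exactness of the Gau\ss-Radau quadrature for degree $2p$ annihilates the leading Taylor term of this integrand, lowering the contribution by one power of $h$. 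Since $\Pi_p u$ interpolates $u$ at the $t_{n,m}$, it follows that $u(t_{n,m})-u_h(t_{n,m})=-(\Pi_p u-u_h)(t_{n,m})=\mathcal{O}(h^{p+2})$ for $0\le m\le p-1$, which is the first claim (and re-derives the generic $\mathcal{O}(h^{p+1})$ bound at arbitrary points).

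For the endpoint I use duality. Let $\psi$ solve the backward adjoint problem $\psi'+a\psi=0$ on $I_n$ with $\psi(t_{n+1})=1$, and let $v_h\in\mathbb{P}^p(I_n)$ be the Gau\ss-Radau interpolant of $\psi$, so that $v_h(t_{n+1}^-)=\psi(t_{n+1})=1$ because $t_{n+1}=t_{n,p}$ is itself a node. Inserting this $v_h$ into the Galerkin orthogonality and using $\psi'+a\psi=0$ gives
\[
  e(t_{n+1}^-) \;=\; \int_{I_n} e\,(v_h'+a\,v_h)\,dt \;=\; \int_{I_n} e\,\big(\eta'+a\,\eta\big)\,dt,\qquad \eta:=v_h-\psi .
\]
Now $\|\eta\|_{L^\infty(I_n)}=\mathcal{O}(h^{p+1})$ (interpolation error of the smooth $\psi$), so $\int_{I_n} a\,e\,\eta\,dt=\mathcal{O}(h^{2p+3})$ by Cauchy--Schwarz; and integrating the remaining term by parts, $\int_{I_n} e\,\eta'\,dt=-\,e(t_n^+)\,\eta(t_n^+)-\int_{I_n} e'\,\eta\,dt$ using $\eta(t_{n+1})=0$, where $e(t_n^+)=\mathcal{O}(h^{p+1})$ and $\eta(t_n^+)=\mathcal{O}(h^{p+1})$ make the boundary term $\mathcal{O}(h^{2p+2})$, while $\|e'\|_{L^2}\|\eta\|_{L^2}=\mathcal{O}(h^{p+1/2})\,\mathcal{O}(h^{p+3/2})=\mathcal{O}(h^{2p+2})$. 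Altogether $u(t_{n+1})-u_h(t_{n+1}^-)=\mathcal{O}(h^{2p+2})$, the second claim.

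The hard part will be twofold. First, extracting the \emph{full} order $h^{2p+2}$ at the endpoint is delicate: it relies crucially on $t_{n+1}$ being a Radau node (so $v_h(t_{n+1}^-)=1$ and $\eta(t_{n+1})=0$ exactly), on the sharp $\mathcal{O}(h^{p})$ bound for $e'$, and on the cancellation structure after integration by parts — losing track of any of these degrades the result by one or two powers of $h$. Second, every estimate above was written in the linearized setting, and upgrading it to genuinely nonlinear $f$ (controlling the quadratic remainder of $f(t,u)-f(t,u_h)$, and bootstrapping from the crude bound where necessary) is the real technical content; for that step, and for the original linear statement, I would rely on~\cite{Adjerid:2002} and~\cite{Lesaint:1974}.
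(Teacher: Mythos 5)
The paper does not prove this lemma at all: its ``proof'' is a citation to Theorem~5 of Adjerid et al.\ \cite{Adjerid:2002} (with the linear case going back to Lesaint--Raviart \cite{Lesaint:1974}), so your proposal is necessarily a different route --- it is an actual proof sketch along the classical lines, and its mechanics check out. Your Galerkin-orthogonality identity on $I_n$ with exact incoming data is correct; the interior-node gain comes, as you say, from the fact that the Radau nodal polynomial $\omega(t)=\prod_{m}(t-t_{n,m})$ is orthogonal to $\mathbb{P}^{p-1}$ (exactness of the $(p{+}1)$-point Gau\ss--Radau rule for degree $2p$ applied to $\omega q$, which vanishes at all nodes), so the defect $\Pi_p u-u_h$ is one order better than the interpolation error and the nodal errors are $\mathcal{O}(h^{p+2})$; and your adjoint argument at the downwind endpoint, using that $t_{n+1}$ is itself a node so $v_h(t_{n+1}^-)=1$ and $\eta(t_{n+1})=0$, delivers $\mathcal{O}(h^{2p+2})$ with the stated bookkeeping ($\int a e\eta=\mathcal{O}(h^{2p+3})$, the remaining terms $\mathcal{O}(h^{2p+2})$). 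Adjerid et al.\ instead obtain the result by expanding the local DG error in an orthogonal-polynomial basis and identifying its leading term with a Radau polynomial; your duality route is the older Lesaint--Raviart/Johnson-style argument, arguably more transparent about \emph{why} the upwind endpoint is special, while the citation route buys the paper the fully rigorous nonlinear statement for free. Two small remarks: the integration by parts in your endpoint estimate is unnecessary, since $\|e\|_{L^\infty}\,\|\eta'\|_{L^\infty}\,|I_n|=\mathcal{O}(h^{p+1})\,\mathcal{O}(h^{p})\,\mathcal{O}(h)=\mathcal{O}(h^{2p+2})$ directly; and invoking Lemma~\ref{lmm:stable} ($A$-stability, a statement about the test equation) for your crude a priori bounds is not quite the right tool --- what you actually need is local solvability/stability of the single-interval DG system, i.e.\ invertibility of $L$ (a fixed matrix depending only on $p$) plus a contraction argument for small $h$, which is also what converts your $\mathcal{O}(h^{p+2})$ residual into the defect bound. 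With that replacement, and the nonlinear technicalities (uniform smoothness of the linearized coefficient $a$ and hence of $\psi$) handled as you indicate by bootstrapping or by deferring to \cite{Adjerid:2002}, the sketch is sound.
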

\begin{proof}
  See Theorem 5 in Adjerid et al.~\cite{Adjerid:2002}.
\end{proof}

As a time stepping method, the DG method has two attractive features: Lemma~\ref{lmm:stable} provides excellent stability ($A$-stability) and Lemma~\ref{lmm:super} high-order accuracy (the global error is of order $2p+1$).  However, the challenges arise when dealing with nonlinear ODEs.  If $f(u)$ is a nonlinear function, then the term $\lambda\frac{\Delta t_n}{2} M U$ becomes $\frac{\Delta t_n}{2}F(U)$, where
\begin{equation} 
    \label{eq:f}
    F(U) = {\left[\int_{-1}^1 f\left(t, \sum\limits_{m=0}^p u_{n,m}\ell_m(t)\right)\ell_0(t)\,dt, \ldots, \int_{-1}^1 f\left(t, \sum\limits_{m=0}^p u_{n,m}\ell_m(t)\right)\ell_p(t)\,dt\right]}^T.
\end{equation}
This leads to the nonlinear system 
\begin{equation} 
  \label{eq:non}
  L U + \frac{\Delta t_n}{2}F(U) + B = 0, 
\end{equation}
which in general can not be easily solved.  In the next section, we will construct an iterative approach to solve the system~\eqref{eq:non} and derive a new class of iterative schemes based on the DG weak form~\eqref{eq:weak}.   

\section{A New Class of Iterative Schemes for DG Methods}
In general, $f(t,u)$ in~\eqref{eq:f} can not be integrated analytically, for this reason, we use numerical quadrature to approximate the integral, see~\cite{Delfour:1981, Delfour:1986, Zhao:2014}.  The choice of quadrature is critical for the final scheme, for example, it can facilitate the derivations of the multi-steps rule, Runge-Kutta methods and hybrid methods from the DG methods, see~\cite{Delfour:1986, Zhao:2014}.  However, only few choices preserve the superconvergence property from Lemma~\ref{lmm:super}.  In this paper, in order to preserve the superconvergence property, we choose the Gau\ss-Radau quadrature to construct the DG approximation~\eqref{eq:uh}.  

%We note that the more popular choice of Gauss-Lobatto points and quadrature rule will lose one accuracy order.  

As consequence of the nodal DG form~\eqref{eq:uh}, we can use the collected Gau\ss-Radau points for quadrature directly, 
\begin{equation} 
  \label{eq:fw}
  F(U) \approx F_\omega(U) = {\left[\omega_0 f_{n,0}, \omega_1 f_{n,1}, \ldots, \omega_p f_{n,p}\right]}^T = \operatorname{diag}\{\omega_0,\ldots,\omega_p\}{\left[f_{n,0}, \ldots, f_{n,p}\right]}^T,
\end{equation}
and the integrals in \eqref{eq:LM} can be evaluated exactly as 
\[
  L_{i,j} = \int_{-1}^{1} \ell'_i(t)\ell_j(t)\,dt - \delta_{ip}\delta_{jp} = \omega_j \ell'_i(t_j) - \delta_{ip}\delta_{jp}, \qquad M_{i,j} = \int_{-1}^{1} \ell_i(t)\ell_j(t)\,dt = \omega_j \delta_{ij},
\]
where $f_{n,m} = f(t_{n,m}, u_{n,m})$ and $t, \omega$ are the correspond right Gau\ss-Radau points and weights over the reference domain $[-1,1]$.  

By means of the numerical quadrature, we simplify the nonlinear system~\eqref{eq:non} to
\begin{equation} 
  \label{eq:non2}
  L U + \frac{\Delta t_n}{2}F_\omega(U) + B = 0.
\end{equation}
However, it is still a fully implicit system, and a nonlinear system has to be solved.  We emphasize that the nonlinearities are now localized, thus allowing for the construction of dedicated solution methods.

\subsection{A Simple Iterative Approach}
A naive way to solve system~\eqref{eq:non2} is to use a fixed point iteration. 

\noindent{\bf Explicit DG Scheme:}
\begin{equation}  
    \label{eq:DG-Ex}
    U^{k+1} = -\frac{\Delta t_n}{2}L^{-1}F_\omega(U^k) - L^{-1}B,
\end{equation}
where $U^0$ can be obtained, for example, by using the explicit Euler method.  

Clearly, the iterative scheme~\eqref{eq:DG-Ex} is an explicit scheme and its stability is guaranteed when $\Delta t_n \to 0$.  Although scheme~\eqref{eq:DG-Ex} avoids to solve a nonlinear system, we will show later that both its stability and convergence speed are not very satisfactory.  However, before going further to improve scheme~\eqref{eq:DG-Ex}, we first present its error estimates. 

\begin{lmm}\label{lmm:DG-Ex0}
  Denote $U^K = {[u^K_{n,0},\ldots,u^K_{n,p}]}^T$ the $K$-th iteration of scheme 
    \begin{equation} 
      \label{eq:DG-Ex0}
      U^{k+1} = -\frac{\Delta t_n}{2}L^{-1}F(U^k) - L^{-1}B,
    \end{equation}
    with $F(U)$ defined in~\eqref{eq:f} and $u_h$ the DG approximation of degree $p$ for the IVP~\eqref{eq:ivp} ($f(t) \in \mathcal{C}^{2p+1}$), then we have 
    \[
        u_h(t_{n,m}) - u_{n,m}^K = \mathcal{O}\left(h^{K+2}\right),\qquad m = 0,\dots,p,
    \]
    where $h = \max\limits_{n} \Delta t_n$, and $U^0$ is obtained by using the explicit Euler method.
\end{lmm}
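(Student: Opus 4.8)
The plan is to read \eqref{eq:DG-Ex0} as a Picard iteration for the exact DG system, show that it contracts with rate $\mathcal{O}(h)$, and start the induction from an $\mathcal{O}(h^2)$-accurate initial guess supplied by explicit Euler. First I would record that the nodal vector of the DG solution, $U_h := {[u_h(t_{n,0}),\dots,u_h(t_{n,p})]}^T$, is the \emph{exact} fixed point of the map in \eqref{eq:DG-Ex0}: indeed $U_h$ solves $L U_h + \tfrac{\Delta t_n}{2}F(U_h) + B = 0$, which is precisely the (exactly integrated) DG system \eqref{eq:non}, and since the reference matrix $L$ of \eqref{eq:LM} is nonsingular (this is the well-posedness of the DG scheme; note for instance that $L\mathbf 1 = -{[\ell_0(-1),\dots,\ell_p(-1)]}^T$), rearranging gives $U_h = -\tfrac{\Delta t_n}{2}L^{-1}F(U_h) - L^{-1}B$.

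Next, set $e^k := U_h - U^k$. Subtracting this identity from \eqref{eq:DG-Ex0} gives
\[
  e^{k+1} = -\frac{\Delta t_n}{2}L^{-1}\bigl(F(U_h) - F(U^k)\bigr).
\]
From the definition \eqref{eq:f} of $F$ and the Lipschitz continuity of $f$, one has $\|F(V) - F(W)\| \le C_F\,\|V - W\|$, where $C_F$ depends only on the Lipschitz constant of $f$ and on fixed reference data (the $L^1$ norms and the Lebesgue constant of the basis $\{\ell_m\}$ on $[-1,1]$), but not on $h$. Since $\Delta t_n \le h$ and $\|L^{-1}\|$ is an $h$-independent constant, this yields the contraction estimate $\|e^{k+1}\| \le C\,h\,\|e^k\|$ with $C := \tfrac12\|L^{-1}\|\,C_F$, hence $\|e^K\| \le (Ch)^K\|e^0\|$. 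It then remains to bound $\|e^0\|$. Let $v$ denote the exact solution on $I_n$ of $v_t=f(t,v)$ with $v(t_n)=u_n$ (the same data that enters $B$). By Lemma~\ref{lmm:super}, $v(t_{n,m}) - u_h(t_{n,m}) = \mathcal{O}(h^{p+2})$ for $0 \le m \le p-1$ and $\mathcal{O}(h^{2p+2})$ for $m=p$; and since $U^0$ is the explicit Euler prediction $u^0_{n,m} = u_n + (t_{n,m}-t_n)f(t_n,u_n)$, a Taylor expansion of $v$ about $t_n$ gives $v(t_{n,m}) - u^0_{n,m} = \mathcal{O}(h^2)$. Adding, $u_h(t_{n,m}) - u^0_{n,m} = \mathcal{O}(h^2)$, i.e. $\|e^0\| = \mathcal{O}(h^2)$; combining with $\|e^K\| \le (Ch)^K\|e^0\|$ and noting that $K$ is fixed (so $(Ch)^K$ is a constant times $h^K$) gives $u_h(t_{n,m}) - u^K_{n,m} = \mathcal{O}(h^{K+2})$ for every $m$.

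I expect the main obstacle to be making the constant $C$ in the contraction estimate genuinely uniform, both in $h$ and along the iteration. If $f$ is globally Lipschitz this is immediate; in general one needs an a priori bound $\|U^k\| \le R$ for all $k$ with $R$ independent of $h$, obtained by a short bootstrap: once $\|e^0\|$ is small and $Ch<1$, the iterates stay in a fixed neighbourhood of $U_h$, which itself is $\mathcal{O}(h)$-close to the constant vector $u_n\mathbf 1$, so the local Lipschitz constant of $f$ on that neighbourhood serves as a uniform $C_F$. The remaining ingredients — nonsingularity of $L$ and the $h$-independence of $\|L^{-1}\|$ and of the reference Lagrange data — are standard and need only brief verification.
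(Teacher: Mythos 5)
Your proposal is correct and follows essentially the same route as the paper: identify the nodal DG vector $U_h$ as the exact fixed point of \eqref{eq:DG-Ex0}, subtract to get the error recursion $e^{k+1} = -\tfrac{\Delta t_n}{2}L^{-1}\bigl(F(U_h)-F(U^k)\bigr)$, gain one factor of $h$ per iteration from $\Delta t_n = \mathcal{O}(h)$ together with the Lipschitz bound on $F$ and the $h$-independence of $L^{-1}$, and start from the $\mathcal{O}(h^2)$ explicit-Euler initial guess. The only differences are presentational (you iterate the contraction multiplicatively where the paper argues by induction on $k$) plus some extra care you add — justifying the $\mathcal{O}(h^2)$ base case via Lemma~\ref{lmm:super} and Taylor expansion, and the uniformity of the Lipschitz constant — which the paper simply asserts or leaves implicit.
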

\begin{proof}
  First, the DG solution $U_h = {[u_h(t_{n,0}), \ldots, u_h(t_{n,p})]}^T$ satisfies the weak formulation
    \[
        L U_h + \frac{\Delta t_n}{2}F(U_h) + B = 0,
    \]
    by comparing it to the iteration scheme 
    \[
        L U^{k+1} = -\frac{\Delta t_n}{2}F(U^k)-B,
    \]
    we have 
    \[
        L (U_h - U^{k+1}) = \frac{\Delta t_n}{2}\left(F(U^k) - F(U_h)\right).
    \]
    For using induction in $k$, we assume, 
    \[
        u_h(t_{n,m}) - u^{k}_{n,m}  = \mathcal{O}(h^{k+2}), \qquad 0 \leq m \leq p.
    \]
    For $k = 0$, the initial guess $U^0$ is given by the explicit Euler method, we have 
    \[
        u_h(t_{n,m}) - u^{0}_{n,m} = \mathcal{O}(h^{2}), \qquad 0 \leq m \leq p.
    \]
    For $k \geq 1$, we denote ${\left\{\cdot\right\}}_j$ the $j$-th component of a vector, then
    \[
      {\left\{\frac{\Delta t_n}{2}\left(F(U^k) - F(U_h)\right)\right\}}_j = \mathcal{O}(h|U^k - U_h|) = \mathcal{O}(h^{k+3}), \qquad 0 \leq j \leq p.
    \]
    Therefore,
    \[
      u_h(t_{n,m}) - u^{k+1}_{n,m} = {\left\{\frac{\Delta t_n}{2}L^{-1}\left(F(U^k) - F(U_h)\right)\right\}}_m = \mathcal{O}(h^{k+3}), \qquad k = 0,\dots,K-1,
    \]
    here $L^{-1}$ is a constant matrix which only depends on $p$.
\end{proof}

\begin{lmm}\label{lmm:DG-Ex1}
  Denote $U^K = {[u^K_{n,0},\ldots,u^K_{n,p}]}^T$ the $K$-th iteration of scheme~\eqref{eq:DG-Ex} and $u_h$ the DG approximation of degree $p$ for the IVP~\eqref{eq:ivp} ($f(t) \in \mathcal{C}^{2p+1}$), then we have 
    \[
        u_h(t_{n,m}) - u_{n,m}^K = \mathcal{O}\left(h^{\min\{2p+2, K+2\}}\right),
    \]
    where $h = \max\limits_{n} \Delta t_n$, and $U^0$ is obtained by using the explicit Euler method.
\end{lmm}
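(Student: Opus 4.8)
The plan is to write the iterate's error as a sum of two contributions:
\[
u_h(t_{n,m}) - u_{n,m}^K = \bigl(u_h(t_{n,m}) - u_h^\omega(t_{n,m})\bigr) + \bigl(u_h^\omega(t_{n,m}) - u_{n,m}^K\bigr),
\]
where $U_h^\omega = [u_h^\omega(t_{n,0}),\dots,u_h^\omega(t_{n,p})]^T$ denotes the solution of the quadrature-based system~\eqref{eq:non2}, i.e. the fixed point of the iteration~\eqref{eq:DG-Ex}. The second term is the iteration error of~\eqref{eq:DG-Ex} towards its own fixed point; the first term is the consistency error between the exact-integration DG system~\eqref{eq:non} and its Gau\ss--Radau-quadrature counterpart~\eqref{eq:non2}. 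The claimed bound will follow by estimating each term as $\mathcal{O}(h^{K+2})$ and $\mathcal{O}(h^{2p+2})$, respectively, and taking the minimum.

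For the iteration error I would reuse the induction of Lemma~\ref{lmm:DG-Ex0} essentially verbatim, with $F$ replaced by $F_\omega$. Since $U_h^\omega$ solves $L U_h^\omega + \tfrac{\Delta t_n}{2}F_\omega(U_h^\omega) + B = 0$, subtracting the iteration gives $L\bigl(U_h^\omega - U^{k+1}\bigr) = \tfrac{\Delta t_n}{2}\bigl(F_\omega(U^k) - F_\omega(U_h^\omega)\bigr)$. The only properties used are that $F_\omega$ is Lipschitz with constant $\mathcal{O}(h)$ after multiplication by $\tfrac{\Delta t_n}{2}$ — which holds because the weights $\omega_j$ are $\mathcal{O}(1)$ on the reference interval and $f$ is Lipschitz — and that $L^{-1}$ depends only on $p$. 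With the explicit Euler initial guess $U^0$ accurate to $\mathcal{O}(h^2)$, the same induction in $k$ then yields $u_h^\omega(t_{n,m}) - u_{n,m}^K = \mathcal{O}(h^{K+2})$ for $m=0,\dots,p$.

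For the quadrature consistency error I would subtract the defining systems~\eqref{eq:non} and~\eqref{eq:non2} to obtain
\[
L(U_h - U_h^\omega) + \tfrac{\Delta t_n}{2}\bigl(F(U_h) - F_\omega(U_h^\omega)\bigr) = 0,
\]
and then split $F(U_h) - F_\omega(U_h^\omega) = \bigl(F(U_h) - F(U_h^\omega)\bigr) + \bigl(F(U_h^\omega) - F_\omega(U_h^\omega)\bigr)$. The first bracket is $\mathcal{O}\bigl(h\,|U_h - U_h^\omega|\bigr)$ and, since $L$ is invertible, can be absorbed into the left-hand side for $h$ small. The $j$-th component of the second bracket is (minus) the right Gau\ss--Radau quadrature error of the $\mathcal{C}^{2p+1}$ integrand $t \mapsto f(t,u_h^\omega(t))\,\ell_{n,j}(t)$ over $I_n$; because the $(p+1)$-node rule is exact for polynomials of degree $\le 2p$ and the factor $\ell_{n,j}$ already has degree $p$, only derivatives of order $\ge p+1$ of the smooth factor $f(t,u_h^\omega(t))$ enter the error, and each such derivative carries the corresponding powers of $\Delta t_n$ (using that $u_h^\omega$ is a degree-$p$ polynomial whose nodal values differ by $\mathcal{O}(h)$, which must be secured by an a priori bound on $U_h^\omega$). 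This makes the quadrature error $\mathcal{O}(h^{2p+2})$, hence $U_h - U_h^\omega = \mathcal{O}(h^{2p+2})$ at all nodes. Combining the two contributions gives $u_h(t_{n,m}) - u_{n,m}^K = \mathcal{O}(h^{2p+2}) + \mathcal{O}(h^{K+2}) = \mathcal{O}\bigl(h^{\min\{2p+2,K+2\}}\bigr)$.

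The main obstacle is precisely this last estimate: converting the exactness degree $2p$ of the Gau\ss--Radau rule into the sharp local order $h^{2p+2}$ for the nonlinear term, i.e. showing that the chosen quadrature does not spoil the superconvergence of Lemma~\ref{lmm:super}. A perhaps cleaner alternative would be to observe that $u_h^\omega$ is itself a DG-type (right Gau\ss--Radau collocation) approximation that retains the superconvergence estimate of Lemma~\ref{lmm:super}; then both $u(t_{n+1}) - u_h(t_{n+1}^{-})$ and $u(t_{n+1}) - u_h^\omega(t_{n+1}^{-})$ are $\mathcal{O}(h^{2p+2})$, so their difference is too, and the same reasoning at the interior Gau\ss--Radau points closes the argument. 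Either way, the delicate point is the rigorous quadrature-consistency bound together with the a priori control of $U_h^\omega$; the rest is a routine repetition of Lemma~\ref{lmm:DG-Ex0}.
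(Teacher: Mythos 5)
Your overall strategy is the paper's strategy, reorganized: the paper runs a single induction on $U_h-U^k$, splitting $F_\omega(U^k)-F(U_h)$ into a Lipschitz part and the quadrature defect $F_\omega(U_h)-F(U_h)$, which it asserts to be $\mathcal{O}(h^{2p+1})$ componentwise; you instead route the same two ingredients through the fixed point $U_h^\omega$ of \eqref{eq:non2}. Your iteration-error half (the induction with $F_\omega$, Euler initialization, constant $L^{-1}$) is sound and is essentially Lemma~\ref{lmm:DG-Ex0} verbatim. Both you and the paper therefore stand or fall on the same quantitative claim: that replacing exact integration by the $(p+1)$-point Gau\ss--Radau rule perturbs the nodal values only at order $h^{2p+2}$.

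That claim is where your argument has a genuine gap, and the sketch you give cannot close it at the interior nodes. Writing $f(t,u_h^\omega(t))=P_p+R$ with $P_p$ of degree $p$ only exploits exactness against the degree-$\le 2p$ part; the remainder satisfies $R=\mathcal{O}(h^{p+1})$ (every derivative with respect to the reference variable carries one factor $\Delta t_n$), so the componentwise quadrature defect is $\mathcal{O}(h^{p+1})$, and after the factor $\tfrac{\Delta t_n}{2}$ and inversion of $L$ you only get $U_h-U_h^\omega=\mathcal{O}(h^{p+2})$, not $\mathcal{O}(h^{2p+2})$. This order is sharp: for $u'=t^2$, $u(0)=0$, $p=1$ on $[0,h]$, the exact-integration DG solution has $u_h(h/3)=0$ while the quadrature (Radau collocation) solution gives $-h^3/27$, a discrepancy of size $h^{p+2}$; and since $f$ is independent of $u$, every iterate $U^K$ with $K\ge 1$ already equals $U_h^\omega$, so no sharpening of the iteration estimate can recover $h^{2p+2}$ at the interior node. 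What is true, and what your fallback correctly identifies, is the endpoint case: $u_h^\omega$ is the $(p+1)$-stage Radau~IIA collocation solution, whose value at $t_{n,p}=t_{n+1}$ is locally accurate of order $2p+2$, as is $u_h(t_{n+1}^-)$ by Lemma~\ref{lmm:super}, so their difference at $m=p$ is $\mathcal{O}(h^{2p+2})$ --- and this endpoint statement is all that Lemma~\ref{lmm:DG-Ex2} and Theorem~\ref{thm:SDG} actually use. So, made rigorous, your argument yields $\min\{2p+2,K+2\}$ at $m=p$ and $\min\{p+2,K+2\}$ at $m<p$; be aware that the paper's own proof leans, at exactly the same spot, on the componentwise bound $\{F_\omega(U_h)-F(U_h)\}_j=\mathcal{O}(h^{2p+1})$, which the example above shows is likewise only $\mathcal{O}(h^{p+1})$ in general.
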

\begin{proof}
    First, the DG solution $U_h = {[u_h(t_{n,0}), \ldots, u_h(t_{n,p})]}^T$ satisfies 
    \[
        L U_h + \frac{\Delta t_n}{2}F(U_h) + B = 0,
    \]
    by comparing it to the iteration scheme 
    \[
        L U^{k+1} = -\frac{\Delta t_n}{2}F_\omega(U^k)-B,
    \]
    we have 
    \[
        L (U_h - U^{k+1}) = \frac{\Delta t_n}{2}\left(F_\omega(U^k) - F(U_h)\right).
    \]
    By using induction on $k$, we assume, 
    \[
        u_h(t_{n,m}) - u^{k}_{n,m}  = \mathcal{O}(h^{\min{\{2p+2,k+2\}}}), \qquad 0 \leq m \leq p.
    \]
    Since 
    \[
        F_\omega(U^k) - F(U_h) = F_\omega(U^k) - F_\omega(U_h) + F_\omega(U_h) - F(U_h),
    \]
    and by Gau\ss-Radau quadrature ($p+1$ points), see~\cite{Kambo:1970},
    \[
      {\left\{F_\omega(U_h) - F(U_h)\right\}}_j = \mathcal{O}(h^{2p+1}),
    \]
    with
    \[
      {\left\{F_\omega(U^k) - F_\omega(U_h)\right\}}_j = \mathcal{O}(|U^k - U_h|) = \mathcal{O}(h^{\min{\{2p+2,k+2\}}}), \qquad 0 \leq j \leq p,
    \]
    we have 
    \[
      {\left\{\frac{\Delta t_n}{2}\left(F_\omega(U^k) - F(U_h)\right)\right\}}_j = \mathcal{O}(h^{\min{\{2p+2,k+3\}}}) + \mathcal{O}(h^{2p+2})= \mathcal{O}\left(h^{\min\{2p+2, k+3\}}\right), \quad 0 \leq j \leq p.
    \]
    Therefore,
    \[
      u_h(t_{n,m}) - u^{k+1}_{n,m} = {\left\{\frac{\Delta t_n}{2}L^{-1}\left(F_\omega(U^k) - F(U_h)\right)\right\}}_m = \mathcal{O}\left(h^{\min\{2p+2, k+3\}}\right), \quad 0\leq m\leq p, \quad 0 \leq k \leq K-1,
    \]
    as mentioned in Lemma~\ref{lmm:DG-Ex0}, $L^{-1}$ is a constant matrix.
\end{proof}

\begin{lmm}\label{lmm:DG-Ex2} (Local Truncation Error.)
  Denote $U^K = {[u^K_{n,0},\ldots,u^K_{n,p}]}^T$ the $K$-th iteration of scheme~\eqref{eq:DG-Ex} and by $u$ the exact solution for the IVP~\eqref{eq:ivp} ($f(t) \in \mathcal{C}^{2p+1}$), then we have 
  \[
    u(t_{n,m}) - u^K_{n,m} = \mathcal{O}\left(h^{\min\{p+2, K+2\}}\right), \quad 0 \leq m \leq p-1.
  \]
  and at the end point $t_{n+1} = t_{n,p}$,
  \[
    u(t_{n+1}) - u^K_{n,p} = \mathcal{O}\left(h^{\min\{2p+2, K+2\}}\right),
  \]
  where $h = \max\limits_{n} \Delta t_n$.
\end{lmm}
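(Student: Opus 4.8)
The plan is to obtain the statement by splitting the error between the exact solution $u$ and the $K$-th iterate $U^K$ into two pieces and estimating each with a result already at hand:
\[
  u(t_{n,m}) - u^K_{n,m} = \bigl(u(t_{n,m}) - u_h(t_{n,m})\bigr) + \bigl(u_h(t_{n,m}) - u^K_{n,m}\bigr),
\]
where $u_h$ denotes the standard, exactly integrated DG approximation of degree $p$ on $I_n$, i.e.\ the solution of~\eqref{eq:non}, computed from the same incoming value $u_n$ as the iteration~\eqref{eq:DG-Ex}. Since we are after the local truncation error, the incoming value is taken to be exact, which is precisely the setting in which Lemma~\ref{lmm:super} applies.

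First I would invoke Lemma~\ref{lmm:DG-Ex1}, which controls the second (iteration) term uniformly in the node index: $u_h(t_{n,m}) - u^K_{n,m} = \mathcal{O}\bigl(h^{\min\{2p+2,\,K+2\}}\bigr)$ for every $0 \le m \le p$. Then I would apply the superconvergence estimate of Lemma~\ref{lmm:super} to the first (discretization) term, distinguishing the two cases in the statement: at the interior nodes $t_{n,m}$, $0 \le m \le p-1$, one has $u(t_{n,m}) - u_h(t_{n,m}) = \mathcal{O}(h^{p+2})$, whereas at the right endpoint $t_{n,p} = t_{n+1}$ the stronger bound $u(t_{n+1}) - u_h(t_{n+1}^-) = \mathcal{O}(h^{2p+2})$ holds.

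Adding the two contributions gives, for $0 \le m \le p-1$,
\[
  u(t_{n,m}) - u^K_{n,m} = \mathcal{O}(h^{p+2}) + \mathcal{O}\bigl(h^{\min\{2p+2,\,K+2\}}\bigr) = \mathcal{O}\bigl(h^{\min\{p+2,\,K+2\}}\bigr),
\]
where the last equality uses $p+2 \le 2p+2$, so that $\min\{p+2,\,2p+2,\,K+2\} = \min\{p+2,\,K+2\}$; and at the endpoint,
\[
  u(t_{n+1}) - u^K_{n,p} = \mathcal{O}(h^{2p+2}) + \mathcal{O}\bigl(h^{\min\{2p+2,\,K+2\}}\bigr) = \mathcal{O}\bigl(h^{\min\{2p+2,\,K+2\}}\bigr).
\]

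There is no real obstacle in this argument: the estimate is just the superposition of the two previously isolated error sources. The only points that need a little care are (i) making sure that the DG reference solution $u_h$ appearing in Lemma~\ref{lmm:DG-Ex1} is exactly the one to which Lemma~\ref{lmm:super} applies, namely the exactly integrated DG solution on $I_n$ started from the exact datum at $t_n$; and (ii) bookkeeping of the minima, so that the interior and endpoint orders come out as $\min\{p+2,\,K+2\}$ and $\min\{2p+2,\,K+2\}$ respectively, rather than being accidentally reduced.
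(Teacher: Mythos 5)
Your proposal is correct and follows essentially the same route as the paper's own proof: split the error as $(u - u_h) + (u_h - U^K)$, bound the first part with the superconvergence estimate of Lemma~\ref{lmm:super} (distinguishing interior Gau\ss-Radau nodes from the right endpoint) and the second with Lemma~\ref{lmm:DG-Ex1}, then combine via the triangle inequality. Your additional remarks on matching the reference DG solution and on the bookkeeping of the minima are sensible but do not change the argument.
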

\begin{proof}
  By Lemma~\ref{lmm:super}, we have 
  \[
    u(t_{n,m}) - u_h(t_{n,m}) = \mathcal{O}\left(h^{p+2}\right), \quad 0 \leq m \leq p-1.
  \]
  and at the end point
  \[
    u(t_{n+1}) - u_h(t_{n,p}) = \mathcal{O}\left(h^{2p+2}\right).
  \]
  From Lemma~\ref{lmm:DG-Ex1}, 
  \[
    u_h(t_{n,m}) - u^{K}_{n,m} = \mathcal{O}\left(h^{\min\{2p+2, K+2\}}\right).
  \]
  By applying the triangular inequality, we finish the proof.  
\end{proof}

\begin{rem}
  By comparing Lemma~\ref{lmm:DG-Ex0} and Lemma~\ref{lmm:DG-Ex1}, we see that the order of accuracy of the iterative approach to the DG approximation is bounded by the accuracy order of the used numerical quadrature and the number of iterations.  Therefore, due to the superconvergence property given by Lemma~\ref{lmm:super}, the right Gau\ss-Radau points will be the proper choice.  In fact, if the more popular Gau\ss-Lobatto points are used, then one order of accuracy will be lost.
\end{rem}

\subsection{A New Iterative Approach}
As mentioned earlier, the explicit scheme~\eqref{eq:DG-Ex} has unsatisfactory stability properties and low convergence speed, and we also want to derive implicit and semi-implicit schemes.  To derive those desired iterative schemes, we consider the general preconditioned iteration scheme as 
\begin{equation}
    \label{eq:preDG}
    U^{k+1} = U^k - P^{-1}\left(U^{k} + \frac{\Delta t_n}{2}L^{-1}F_\omega(U^k) + L^{-1}B\right).
\end{equation}
It is obvious that when $P = I$, we have the explicit scheme~\eqref{eq:DG-Ex}.  For simplicity, we denote $F_\omega U \equiv F_\omega(U)$.  Then, we obtain the original DG weak scheme by choosing $P = I + \frac{\Delta t_n}{2}L^{-1}F_\omega$.  In order to reduce the complexity, in this section, we consider $P = I + \frac{\Delta t_n}{2}L_\Delta^{-1}F_\omega$ where $L_\Delta$ is an approximation of $L$. 

As mentioned earlier in~\eqref{eq:1}, the matrix $L$ is given by
\[
  L_{i,j} = \int_{-1}^{1} \ell'_i(t)\ell_j(t)\,dt - \delta_{ip}\delta_{jp}, \qquad 0 \leq i,j \leq p,
\]
which mainly arises form ${\displaystyle \int_{t_n}^{t_{n+1}} \sum\limits_{m = 0}^p u_{n,m}\ell_m(t)\ell'_j(t)\,dt}$.  Here, by considering piecewise constant approximations
\[
  \sum\limits_{m =0}^p u_{n,m}\ell_m(t) \approx \sum\limits_{m=0}^{p-1} u_{n,m}\chi[t_{n,m},t_{n,m+1}],
\]
where $\chi$ is the standard characteristic function.  We have
\begin{align*}  
    &\, \int_{t_n}^{t_{n+1}} \sum\limits_{m = 0}^p u_{n,m}\ell_m(t)\ell'_j(t)\,dt \approx  \int_{t_n}^{t_{n+1}}\left\{\sum\limits_{m=0}^{p-1} u_{n,m}\chi[t_{n,m},t_{n,m+1}]\ell'_j(t)\right\}\,dt \\
    = &\, \sum\limits_{m=0}^{p-1}u_{n,m}\int_{t_{n,m}}^{t_{n,m+1}}\ell'_j(t)\,dt = \sum\limits_{m=0}^{p-1}u_{n,m}\left(\ell_j(t_{n,m+1}) - \ell_j(t_{n,m})\right).
\end{align*}
This leads to an approximation of $L$ given by 
\[
    L_\Delta = \left[\begin{array}{ccccc}
            -1  &    & & & \\
            1  & -1 & & &\\
               &\ddots & \ddots & &\\
               & & 1 & -1 & \\
               &&  & 1 & \color{red}-1
    \end{array}\right],
\]
where the red component is contributed from the $- \delta_{ip}\delta_{jp}$ term in~\eqref{eq:1}.  Substituting the $L_\Delta$ back into the preconditioned scheme~\eqref{eq:preDG}, we get
\[
    \left(I + \frac{\Delta t_n}{2}L_\Delta^{-1} F_\omega \right)U^{k+1} = \left(I + \frac{\Delta t_n}{2}L_\Delta^{-1} F_\omega \right)U^{k} - \left(U^{k} + \frac{\Delta t_n}{2} L^{-1}F_\omega(U^k) + L^{-1}B\right)
\]
$\Longrightarrow$
\[
    L_\Delta U^{k+1} + \frac{\Delta t_n}{2}F_\omega(U^{k+1}) = \frac{\Delta t_n}{2}F_\omega(U^k) - \frac{\Delta t_n}{2} L_\Delta L^{-1}F_\omega(U^k) - L_\Delta L^{-1}B.
\]
For convenience, we denote $B = {[b_0, \ldots, b_p]}^T$ and $L^{-1} = \left\{l^{-1}_{i,j}\right\}$.  Then we write the above formula componentwise for $U^k = {[u_{n,0}^k, \ldots, u_{n,p}^k]}^T$:
\begin{align*}  
    u^{k+1}_{n,0} & = \frac{\Delta t_n}{2}\omega_0\left(f(u^{k+1}_{n,0}) - f(u^{k}_{n,0})\right)  - \sum\limits_{j=0}^p l^{-1}_{0j}  \left(\frac{\Delta t_n}{2} \omega_j f(u^{k}_{n,j}) + b_j\right) \\
    u^{k+1}_{n,m+1} & = u^{k+1}_{n,m} + \frac{\Delta t_n}{2}\omega_{m+1}\left(f(u^{k+1}_{n,m+1}) - f(u^{k}_{n,m+1})\right) \\
                  &\qquad\qquad + \sum\limits_{j=0}^p \left(l^{-1}_{m,j} - l^{-1}_{m+1,j}\right)\left(\frac{\Delta t_n}{2} \omega_j f(u^{k}_{n,j}) + b_j\right), \qquad 0 \leq m \leq p-1. 
\end{align*}
%We note that the matrix $L$ only depends on the polynomial degree $p$ and the chosen Gau\ss-Radau points, which means we can pre-calculate the matrix $L^{-1}$ and store it for later usage.  
Furthermore, if we define a matrix $\tilde{L} = \left\{\tilde{l}_{i,j}\right\}$ as 
\[
    \tilde{L} = \left[\begin{array}{rrrr}
            - l^{-1}_{0,0} & - l^{-1}_{0,1} & \cdots & -l^{-1}_{0,p} \\
            l^{-1}_{0,0} - l^{-}_{1,0} & l^{-1}_{0,1} - l^{-1}_{1,1} & \cdots & l^{-1}_{0,p} - l^{-1}_{1,p} \\
            \vdots & \vdots & \ddots & \vdots \\
            l^{-1}_{p-1,0} - l^{-}_{p,0} & l^{-1}_{p-1,1} - l^{-1}_{p,1} & \cdots & l^{-1}_{p-1,p} - l^{-1}_{p,p} 
    \end{array}\right],
\]
and a simple analysis shows that $\tilde{L}B = {u_n\left[1, 0, \ldots,0\right]}^T$.  Now, we can rewrite the iterative formula as

\noindent{\bf Implicit SDG Scheme:}
\begin{align}  
    \label{eq:SDG-Im-0}
    u^{k+1}_{n,0} & = u_n + \frac{\Delta t_n}{2}\omega_0\left(f(u^{k+1}_{n,0}) - f(u^{k}_{n,0})\right)  + \frac{\Delta t_n}{2} \sum\limits_{j=0}^p \tilde{l}_{0j} \omega_j f(u^{k}_{n,j}) \\
    u^{k+1}_{n,m+1} & = u^{k+1}_{n,m} + \frac{\Delta t_n}{2}\omega_{m+1}\left(f(u^{k+1}_{n,m+1}) - f(u^{k}_{n,m+1})\right) \notag \\
    \label{eq:SDG-Im-m}
    &\qquad\qquad + \frac{\Delta t_n}{2} \sum\limits_{j=0}^p \tilde{l}_{m+1,j} \omega_j f(u^{k}_{n,j}), \qquad 0 \leq m \leq p-1. 
\end{align}
where $U^0$ can be obtained, for example, by using the implicit Euler method.

\begin{rem}
  We note that matrix $\tilde{L}$ only depends on the polynomial degree $p$, which means it only needs to be computed once during the solving process. 
\end{rem}

Similar to the implicit scheme~\eqref{eq:SDG-Im-m}, we can derive an explicit version 

\noindent{\bf Explicit SDG Scheme:}
\begin{align} 
    \label{eq:SDG-Ex-0}
    u^{k+1}_{n,0} & =  u_n + \frac{\Delta t_n}{2}\sum\limits_{j=0}^p {\tilde{l}}_{0j} \omega_j  f(u^{k}_{n,j}) \\
    u^{k+1}_{n,m+1} & = u^{k+1}_{n,m} + \frac{\Delta t_n}{2}\omega_{m}\left(f(u^{k+1}_{n,m}) - f(u^{k}_{n,m})\right) \notag \\
    \label{eq:SDG-Ex-m}
    &\qquad\qquad + \frac{\Delta t_n}{2} \sum\limits_{j=0}^p \tilde{l}_{m+1,j}\omega_j f(u^{k}_{n,j}), \qquad 0 \leq m \leq p-1, 
\end{align} 
where $U^0$ can be obtained, for example, by using the explicit Euler method.

\begin{rem}
  We note that schemes~\eqref{eq:SDG-Ex-m} and~\eqref{eq:SDG-Im-m} are very similar to the explicit and implicit SDC schemes~\cite{Dutt:2000}.  For example, the implicit SDC scheme has form 
\[
  u^{k+1}_{n,m+1} = u^{k+1}_{n,m} + \Delta t_{n,m} \left(f(u^{k+1}_{n,m+1}) - f(u^{k}_{n,m+1})\right) + \frac{\Delta t_n}{2}\sum\limits_{j=0}^p q_{m+1,j} f(u^{k}_{n,j}), \qquad 0 \leq m \leq p-1.
\]
The computational cost of one SDG iteration is the same as for one SDC iteration with the same collocation nodes.  Therefore, for the complexity of the SDG methods, one can simply refer to the complexity of SDC methods~\cite{Dutt:2000, Minion:2003}.  However, we emphasize that the error estimate for SDC methods is $\min\{p+1, K+1\}$~\cite{Xia:2007} instead of $\min\{2p+1, K+1\}$ in Theorem~\ref{thm:SDG}.

Due to the similarity between the new proposed scheme and the SDC scheme, we simply name the new scheme as ``SDG'', where letter ``S'' can refer either ``spectral'' (like SDC) or ``superconvergence''.
\end{rem}

\begin{rem}
  Instead of deriving the SDG schemes from the previous algebra formulation, the same formula can also be obtained by considering a correction method based on the weak Galerkin form.  

  Denote $u^k \in V_h^p$ the $k$-th approximation to the DG solution $u_h$, the defect equation is given by
\[
  \delta^k = u_h - u^k \in V_h^p.
\]
Substituting $u_h = u^k + \delta_h$ into the weak form~\eqref{eq:weak}, we have
\begin{align*} 
    &-\int_{t_n}^{t_{n+1}} \delta^k {(v_h)}_t \,dt + \delta^k(t_{n+1}^{-})v_h(t_{n+1}^{-}) -  \int_{t_n}^{t_{n+1}} \left(f(t,u^k + \delta^k) - f(t,u^k)\right)v_h \,dt \\
    =\, & - \left( -\int_{t_n}^{t_{n+1}} u^k {(v_h)}_t \,dt + u^k(t_{n+1}^{-})v_h(t_{n+1}^{-}) - u_h(t_{n}^{- })v_h(t_{n}^{+}) - \int_{t_n}^{t_{n+1}}f(t, u^k)v_h \,dt\right).
\end{align*}
Since $u^k$ is an approximation of $u_h$, we have, for the right hand side
\[
    -\int_{t_n}^{t_{n+1}} u^k {(v_h)}_t \,dt + u^k(t_{n+1}^{-})v_h(t_{n+1}^{-}) - u_h(t_{n}^{- })v_h(t_{n}^{+}) - \int_{t_n}^{t_{n+1}}f(t, u^k)v_h \,dt \approx 0. 
\]
Therefore, we can approximate the defect $\delta^k$ by piecewise constant approximations
\[
  \delta^k(t) \approx \delta_h(t) = \sum\limits_{m=0}^{p-1} \delta_{n,m}\chi[t_{n,m},t_{n,m+1}], \quad t \in [t_n, t_{n+1}],
\]
which satisfies the weak form
\[
    -\int_{t_n}^{t_{n+1}} \delta_h {(v_h)}_t \,dt + \delta_h(t_{n+1}^{-})v_h(t_{n+1}^{-}) =   \int_{t_n}^{t_{n+1}} \left(f(t,u^k + \delta_h) - f(t,u^k)\right)v_h \,dt \qquad \forall v_h \in V_h^p.
\]
Finally, we update the new approximation $u^{k+1} = u^{k} + \delta_h$.  
\end{rem}

\begin{lmm}\label{lmm:SDG} (Local Truncation Error.)
  Denote $U^K = {[u^K_{n,0},\ldots,u^K_{n,p}]}^T$ the $K$-th iteration of the explicit or the implicit SDG scheme and $u$ the exact solution for the IVP~\eqref{eq:ivp}, then we have 
  \[
    u(t_{n,m}) - u^K_{n,m} = \mathcal{O}\left(h^{\min\{p+2, K+2\}}\right), \quad 0 \leq m \leq p-1.
  \]
  and at the end point $t_{n+1} = t_{n,p}$,
  \[
    u(t_{n+1}) - u^K_{n,p} = \mathcal{O}\left(h^{\min\{2p+2, K+2\}}\right),
  \]
  where $h = \max\limits_{n} \Delta t_n$.
\end{lmm}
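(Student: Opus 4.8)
The plan is to carry over, essentially verbatim, the two-step strategy used for the simple iteration in Lemmas~\ref{lmm:DG-Ex1} and~\ref{lmm:DG-Ex2}. First I would write
\[
  u(t_{n,m}) - u^K_{n,m} = \bigl(u(t_{n,m}) - u_h(t_{n,m})\bigr) + \bigl(u_h(t_{n,m}) - u^K_{n,m}\bigr),
\]
and control the first bracket by the superconvergence estimate of Lemma~\ref{lmm:super} (assuming, as there, $f$ smooth enough): it is $\mathcal{O}(h^{p+2})$ at the interior Gau\ss-Radau nodes and $\mathcal{O}(h^{2p+2})$ at $t_{n+1}=t_{n,p}$. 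The whole content of the lemma then reduces to the SDG analogue of Lemma~\ref{lmm:DG-Ex1}, namely the claim
\[
  u_h(t_{n,m}) - u^k_{n,m} = \mathcal{O}\bigl(h^{\min\{2p+2,\,k+2\}}\bigr), \qquad 0\le m\le p,
\]
for both the explicit scheme~\eqref{eq:SDG-Ex-0}--\eqref{eq:SDG-Ex-m} and the implicit scheme~\eqref{eq:SDG-Im-0}--\eqref{eq:SDG-Im-m}; once this is available, the triangle inequality yields order $\min\{p+2,K+2\}$ in the interior and $\min\{2p+2,K+2\}$ at the endpoint exactly as in the proof of Lemma~\ref{lmm:DG-Ex2}.

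To establish the claim I would argue by induction on the iteration index $k$, with a nested induction on the node index $m$ inside each iteration. The base case $k=0$ is the $\mathcal{O}(h^2)$ accuracy of the explicit/implicit Euler initialization, uniformly in $m$. For the inductive step, observe that the fixed point $U^\ast$ of the SDG iteration is precisely the solution of the quadrature DG system~\eqref{eq:non2}, and that the argument in the proof of Lemma~\ref{lmm:DG-Ex1} — splitting $F_\omega(U^\ast)-F(U_h) = \bigl(F_\omega(U^\ast)-F_\omega(U_h)\bigr) + \bigl(F_\omega(U_h)-F(U_h)\bigr)$, using the $\mathcal{O}(h^{2p+1})$ Gau\ss-Radau quadrature error together with $L^{-1}$ being a constant matrix depending only on $p$ — gives $u_h(t_{n,m}) - u^\ast_{n,m} = \mathcal{O}(h^{2p+2})$; so it suffices to track $u^\ast_{n,m} - u^{k+1}_{n,m}$. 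Subtracting the fixed-point identity from the update for $u^{k+1}_{n,0}$ gives $u^\ast_{n,0}-u^{k+1}_{n,0} = \tfrac{\Delta t_n}{2}\sum_j \tilde{l}_{0j}\omega_j\bigl(f(u^\ast_{n,j})-f(u^k_{n,j})\bigr) = \mathcal{O}\bigl(h\,|U^\ast-U^k|\bigr)=\mathcal{O}\bigl(h^{\min\{2p+3,\,k+3\}}\bigr)$ by the outer hypothesis and the Lipschitz continuity of $f$. For the step $m\to m+1$, subtracting the two relations gives
\[
  u^\ast_{n,m+1}-u^{k+1}_{n,m+1} = \bigl(u^\ast_{n,m}-u^{k+1}_{n,m}\bigr) - \tfrac{\Delta t_n}{2}\omega_\star\bigl(f(u^{k+1}_{n,\star})-f(u^k_{n,\star})\bigr) + \tfrac{\Delta t_n}{2}\sum_j \tilde{l}_{m+1,j}\omega_j\bigl(f(u^\ast_{n,j})-f(u^k_{n,j})\bigr),
\]
where $\star$ stands for $m$ in the explicit and $m+1$ in the implicit case. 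The first term is $\mathcal{O}(h^{\min\{2p+2,\,k+3\}})$ by the inner hypothesis, the last term is again $\mathcal{O}(h\,|U^\ast-U^k|)=\mathcal{O}(h^{\min\{2p+3,\,k+3\}})$, and the correction term is $\mathcal{O}(h)$ times a difference of iterates each within $\mathcal{O}(h^{\min\{2p+2,\,k+2\}})$ of $u^\ast_{n,\star}$, hence $\mathcal{O}(h^{\min\{2p+3,\,k+3\}})$; summing and intersecting with the $\mathcal{O}(h^{2p+2})$ gap closes the induction.

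I expect the main obstacle to be the implicit correction term $\tfrac{\Delta t_n}{2}\omega_{m+1}\bigl(f(u^{k+1}_{n,m+1})-f(u^k_{n,m+1})\bigr)$: there $u^{k+1}_{n,m+1}$ occurs on both sides, so one must first invoke the Banach fixed-point theorem (legitimate for $\Delta t_n$ small, since $f$ is Lipschitz) to see that the update is well defined, and then absorb the resulting $\mathcal{O}\bigl(h\,|u^\ast_{n,m+1}-u^{k+1}_{n,m+1}|\bigr)$ contribution into the left-hand side via $(1-\mathcal{O}(h))^{-1}=1+\mathcal{O}(h)$. The remaining difficulty is only the bookkeeping of the doubly-nested induction and the verification that all $\mathcal{O}$-constants are independent of $n$, $k$, and $m$, which holds because $L^{-1}$, $\tilde{L}$, and the reference weights $\omega_j$ depend on $p$ alone; the rest is a routine repetition of the estimates in Lemmas~\ref{lmm:DG-Ex1}--\ref{lmm:DG-Ex2}.
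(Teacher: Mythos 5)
Your proposal is correct and follows essentially the same route the paper intends: the paper's own proof is just the remark that the argument of Lemma~\ref{lmm:DG-Ex2} carries over because the SDG schemes all stem from the preconditioned iteration~\eqref{eq:preDG}, i.e.\ triangle inequality against $u_h$ via Lemma~\ref{lmm:super} plus an induction showing one order of gain per sweep, capped at the $\mathcal{O}(h^{2p+2})$ quadrature accuracy. Your componentwise version through the fixed point $U^\ast$ of~\eqref{eq:non2}, including the absorption of the implicit correction term for small $h$, simply fills in the details the paper leaves implicit.
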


\begin{proof}
  The proof is similar as Lemma~\ref{lmm:DG-Ex2}, as all the schemes are sourced from the preconditioned scheme~\eqref{eq:preDG}
\end{proof}

As the one-step time stepping methods, from Lemma~\ref{lmm:SDG}, we have 
\begin{thm}\label{thm:SDG} (Global Error.)
  The explicit and implicit SDG methods with $K$ iterations are $\min\{2p+1, K+1\}$ order accurate methods for the IVP~\eqref{eq:ivp}. 
\end{thm}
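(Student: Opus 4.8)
The plan is to deduce the global order from the local truncation error of Lemma~\ref{lmm:SDG} by the classical convergence argument for one-step methods (the ``Lady Windermere's fan'' accumulation of local errors). First I would make explicit that, for a fixed number of iterations $K$, the explicit and the implicit SDG schemes are genuine one-step methods: the update $u_n \mapsto u_{n+1} = u^K_{n,p}$ can be written as $u_{n+1} = \Phi_{\Delta t_n}(t_n,u_n)$, where the increment function $\Phi$ is the composition of the $K$ correction sweeps~\eqref{eq:SDG-Im-0}--\eqref{eq:SDG-Im-m} (resp.~\eqref{eq:SDG-Ex-0}--\eqref{eq:SDG-Ex-m}) with the chosen initialization $U^0$ (explicit or implicit Euler). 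In the implicit case each sweep is explicit except for the scalar equations $u^{k+1}_{n,m} = (\cdots) + \tfrac{\Delta t_n}{2}\omega_m f(u^{k+1}_{n,m})$, which for $h$ small enough have a unique solution depending Lipschitz-continuously on the right-hand side, by the contraction principle and $f\in\mathcal{C}^{2p+1}$ (hence locally Lipschitz). Tracking the dependence on $u_n$ through the sweeps — the new initial node picks up $u_n$ with coefficient $1/(1-\mathcal{O}(h)) = 1+\mathcal{O}(h)$ and all remaining contributions carry a factor $\Delta t_n$ — shows that $\Phi_{\Delta t_n}(t_n,\cdot)$ is Lipschitz in its second argument with a constant of the form $1+Ch$, uniformly for $\Delta t_n\le h$ with $h$ below a threshold independent of $n$; here $C$ depends only on $p$, $K$, and the Lipschitz constant of $f$ on a neighbourhood of the exact trajectory.

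Second, I would invoke Lemma~\ref{lmm:SDG} with the localizing assumption $u_n = u(t_n)$: the one-step error at the transported node is
\[
  u(t_{n+1}) - \Phi_{\Delta t_n}(t_n,u(t_n)) = u(t_{n+1}) - u^K_{n,p} = \mathcal{O}\!\left(h^{\min\{2p+2,\,K+2\}}\right),
\]
i.e.\ the local truncation error has order $q+1$ with $q := \min\{2p+1,\,K+1\}$. The intermediate nodes $t_{n,m}$, $m\le p-1$, carry only the weaker local error $\mathcal{O}(h^{\min\{p+2,K+2\}})$, but they are never passed between intervals and hence do not enter the global estimate.

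Third, I would run the standard telescoping argument. Setting $e_n := u(t_n) - u_n$ and inserting $\pm\,\Phi_{\Delta t_n}(t_n,u(t_n))$,
\[
  e_{n+1} = \big(u(t_{n+1}) - \Phi_{\Delta t_n}(t_n,u(t_n))\big) + \big(\Phi_{\Delta t_n}(t_n,u(t_n)) - \Phi_{\Delta t_n}(t_n,u_n)\big),
\]
so that $|e_{n+1}| \le (1+Ch)\,|e_n| + C' h^{q+1}$ with $e_0 = 0$. Discrete Grönwall then gives $|e_n| \le \tfrac{C'}{C}\big(e^{CT}-1\big)\,h^{q}$ for all $t_n\le T$, which is precisely order $\min\{2p+1,\,K+1\}$.

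The routine pieces are the Grönwall recursion and the initialization error bound for $U^0$ (Euler gives $1+\mathcal{O}(h)$ Lipschitz dependence on $u_n$ and an $\mathcal{O}(h^2)$ consistency error, already folded into Lemma~\ref{lmm:SDG}). The one point needing genuine care is the uniformity of the Lipschitz constant of $\Phi$: I would verify that the implicit scalar solves in the implicit SDG sweeps are well posed and contractive for all $h$ below an $n$-independent threshold, and that iterating the sweeps $K$ times keeps the constant of the form $1+Ch$ rather than letting it grow like $(1+Ch)^{\,\#\text{operations}}$ uncontrollably — since $K$ is held fixed, a $K$-dependent but $h$-independent bound suffices, and this should simply be stated. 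No re-derivation of the per-interval estimate is needed, since Lemma~\ref{lmm:SDG} already provides exactly the local truncation error (error versus exact data) that the convergence theorem consumes.
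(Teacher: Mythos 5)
Your proposal is correct and follows essentially the same route as the paper, which states Theorem~\ref{thm:SDG} as an immediate consequence of the local truncation error in Lemma~\ref{lmm:SDG} together with the standard convergence argument for one-step methods, without writing out any details. Your write-up simply fills in what the paper leaves implicit (the $1+Ch$ Lipschitz stability of the per-step update map, well-posedness of the implicit nodal solves, and the discrete Gr\"onwall accumulation), and correctly observes that only the endpoint value $u^K_{n,p}$ propagates between intervals, so the weaker interior-node estimate does not degrade the global order.
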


When the right hand side $f(t,u)$ of the IVP~\eqref{eq:ivp} can be split into a non-stiff term $f_N(t,u)$ and a stiff term $f_S(t,u)$, we have 
\begin{align}   
    \label{eq:ivp2}
        & u_t  = f(t, u(t)) = f_N(t, u(t)) + f_S(t, u(t)), \,\, t \in [0, T] \\
        & u(0) = u_0 \notag.
\end{align}
By combining the explicit and implicit SDG schemes, we can easily construct a semi-implicit or say implicit-explicit (IMEX) scheme as 

\noindent{\bf Semi-Implicit SDG Scheme:}
\begin{align}
    \label{eq:SDG-SI-0}
    u^{k+1}_{n,0} & = u_n + \frac{\Delta t_n}{2}\omega_0\left(f_S(u^{k+1}_{n,0}) - f_S(u^{k}_{n,0})\right)  + \frac{\Delta t_n}{2} \sum\limits_{j=0}^p \tilde{l}_{0j} \omega_j f(u^{k}_{n,j}) \\
    u^{k+1}_{n,m+1} & = u^{k+1}_{n,m} + \frac{\Delta t_n}{2}\omega_{m}\left(f_N(u^{k+1}_{n,m}) - f_N(u^{k}_{n,m})\right)+\frac{\Delta t_n}{2}\omega_{m+1}\left(f_S(u^{k+1}_{n,m+1}) - f_S(u^{k}_{n,m+1})\right)\notag \\
    \label{eq:SDG-SI-m}
    &\qquad\qquad\qquad + \frac{\Delta t_n}{2} \sum\limits_{j=0}^p \tilde{l}_{m+1,j} \omega_j f(u^{k}_{n,j}), \qquad 0 \leq m \leq p-1. 
\end{align}

To simplify the writing, we refer the explicit, implicit and semi-implicit SDG methods using the polynomial of degree $p$ and $K$ iterative steps to $ExSDG_p^K$, $ImSDG_p^K$, and $SISDG_p^K$.

\subsubsection{Example}
To illustrate how the SDG methods work, we apply the SDG methods to the classic Dahlquist’s test problem
\begin{equation} 
  \label{eq:Dahlquist}
    \left\{\begin{array}{l} 
      u_t = \lambda u(t), \\
      u(0) = 1,
    \end{array}\right.
\end{equation}
until time $t = 1$ with one step ($\Delta t = 1$).  To avoid difference caused by different initialization methods (explicit or implicit Euler method), we simply initialize the the iteration with the constant values $u^0_m = u(0),\,m = 0,\dots,p$.  Note, since the initial step has convergence rate of $\mathcal{O}(1)$, theoretically, we need $2p+1$ iterations to achieve the convergence rate of $\mathcal{O}(h^{2p+1})$.  In Figure~\ref{fig:Dahlquists1}, we compared global errors of using the simple explicit scheme~\eqref{eq:DG-Ex}, the explicit and implicit SDG methods.  One can see that as Theorem~\ref{thm:SDG} suggested, each iteration enhance the accuracy by one order, and finally all three methods convergent to the standard DG approximation.  We also note that the naive explicit scheme~\eqref{eq:DG-Ex} has worse performance compared to the two SDG schemes, although it has the simplest formula.      

%\eqref{eq:DG-Ex}
\begin{figure}[!ht] 
    \centering
    %\begin{minipage}[c]{0.48\textwidth}
    %\includegraphics[width=1.\textwidth]{}
    %\end{minipage}
    \begin{minipage}[c]{0.48\textwidth}
      \centerline{$\qquad p = 5$}
    \includegraphics[width=1.\textwidth]{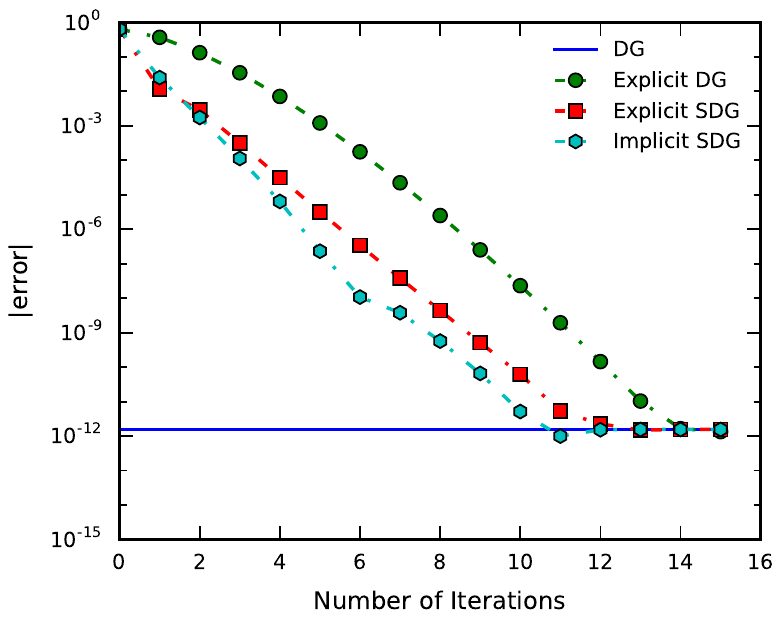}
    \end{minipage}
    \begin{minipage}[c]{0.48\textwidth}
      \centerline{$\qquad p = 6$}
    \includegraphics[width=1.\textwidth]{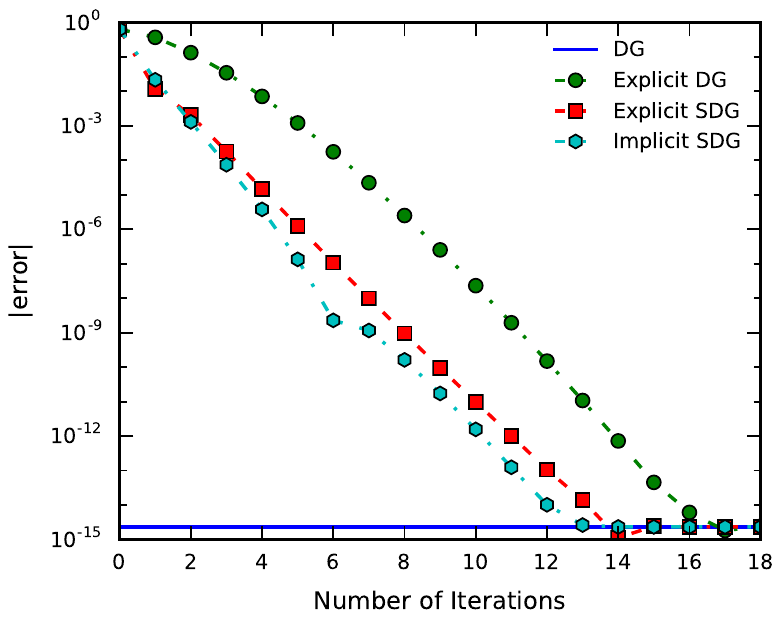}
    \end{minipage}
    
    \begin{center}
    \caption{\label{fig:Dahlquists1}
    Errors for the Dahlquist's test problem~\eqref{eq:Dahlquist} with $\lambda = -1$ and $\Delta t = 1$ computed  with using the $ExDG_p^K$, $ExSDG_p^K$, and $ImSDG_p^K$ methods for $K$ ranging from $0$ to $3p$.  
}
     \end{center}
\end{figure}

%\begin{figure}[!ht] 
  %  \centering
    %\begin{minipage}[c]{0.48\textwidth}
    %\includegraphics[width=1.\textwidth]{}
    %\end{minipage}
    %\begin{minipage}[c]{0.48\textwidth}
    %\includegraphics[width=1.\textwidth]{}
    %\end{minipage}
    %\begin{minipage}[c]{0.48\textwidth}
    %\includegraphics[width=1.\textwidth]{}
    %\end{minipage}
    
    %\begin{center}
    %\caption{\label{fig:Dahlquists2}
    %Errors for the Dahlquist's test problem~\eqref{eq:Dahlquist} with $\lambda = -2$ and $\Delta t = 1$ computed  with using the $ExDG_p^K$, $ExSDG_p^K$, and $ImSDG_p^K$ methods for $K$ ranging from $0$ to $3p$.  
%}
 %    \end{center}
%\end{figure}

\subsection{Stability Property of SDG schemes}
Usually, we are concerned about two critical characteristics of a time stepping scheme. One of them is the order of accuracy, which we have already proved for the SDG schemes in Theorem~\ref{thm:SDG}.  In this section, we look to another important characteristic of the SDG schemes: stability. 

The stability of a numerical method is general analyzed by applying it to the Dahlquist's test problem~\eqref{eq:Dahlquist} with $t \in [0,1]$.  For $\lambda \in \mathbb{C}$, we have 
\[
    u(1) = Am(\lambda)u(0),
\]
where $Am(\lambda)$ is defined as the amplification factor.  The stability region of a numerical scheme for the equation~\eqref{eq:Dahlquist} is defined as the subset of the complex plane $\mathbb{C}$ consisting of all $\lambda$ such that $Am(\lambda) \leq 1$.  

First, we compute the stability region for the $ExSDG_p^K$ method for several choice of $p$ with $K = p$ and $K = 2p$, see Figure~\ref{fig:StabilityExSDG}.  Here, for comparison, we also provide the stability region for the popular explicit third and fourth order Runge-Kutta methods in Figure~\ref{fig:StabilityERK}.  In Figure~\ref{fig:StabilityExSDG}, we can see that the size of stability regions grows with the polynomial degree $p$, also the stability regions of explicit SDG methods are clearly larger compared to explicit Runge-Kutta methods in Figure~\ref{fig:StabilityERK}.  This result suggests that the $ExSDG_p^K$ method is suitable especially for non-stiff and little stiff problems.  We also note that if one only requires the regular convergence rate $p+1$ ($K = p$), the stability region is slightly larger than the superconvergence case ($K = 2p$). 

\begin{figure}[!ht]  
    \centering
    %\begin{minipage}[c]{0.48\textwidth}
    %\includegraphics[width=1.\textwidth]{figures/Dahlquists1_p4.pdf}
    %\end{minipage}
    \begin{flushleft}
      Regular Convergence ($K = p$)
    \end{flushleft}
    %\vspace{-0.5cm}
    \begin{minipage}[c]{0.48\textwidth}
      \centerline{$\quad\,\, p = 4$}
    \includegraphics[width=1.\textwidth]{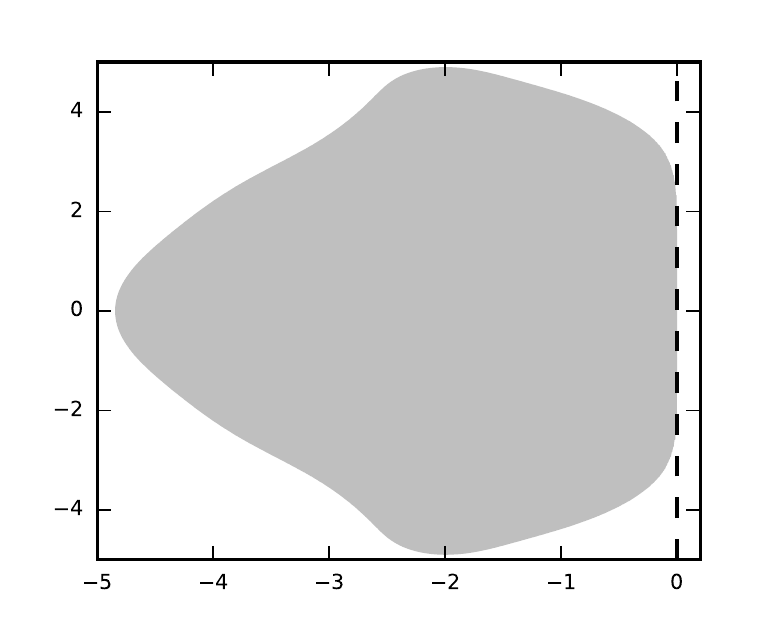}
    \end{minipage}
    \begin{minipage}[c]{0.48\textwidth}
      \centerline{$\quad\,\, p = 8$}
        \includegraphics[width=1.\textwidth]{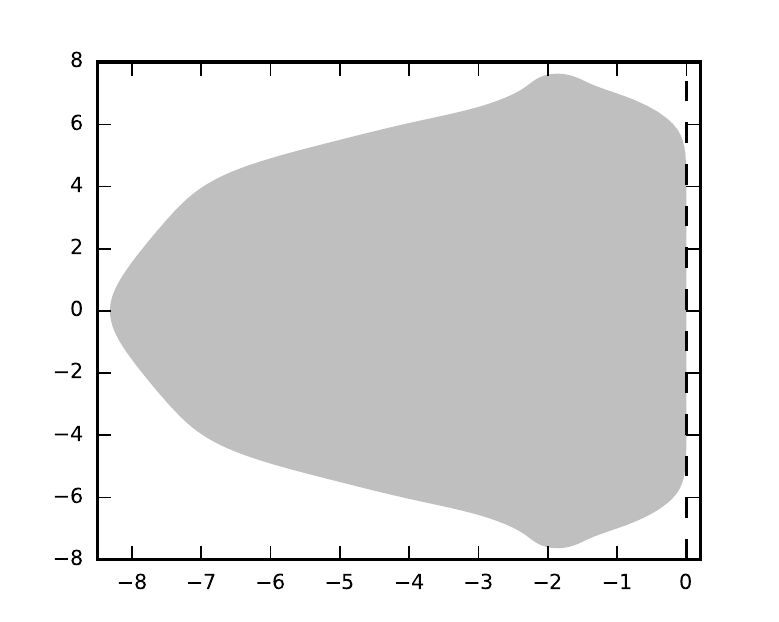}
    \end{minipage}

    \begin{flushleft}
      Superconvergence ($K = 2p$)
    \end{flushleft}
    %\vspace{-0.5cm}
    \begin{minipage}[c]{0.48\textwidth}
      \centerline{$\quad\,\, p = 4$}
    \includegraphics[width=1.\textwidth]{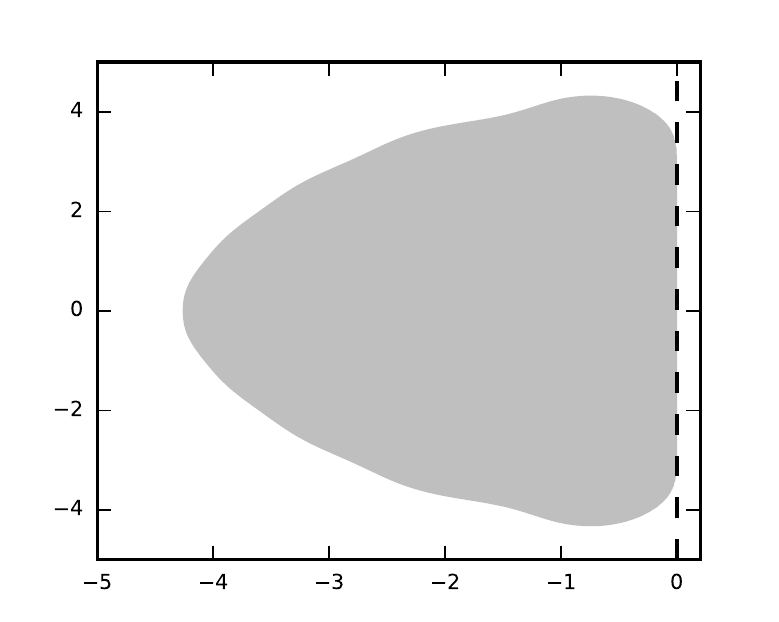}
    \end{minipage}
    \begin{minipage}[c]{0.48\textwidth}
      \centerline{$\quad\,\, p = 8$}
    \includegraphics[width=1.\textwidth]{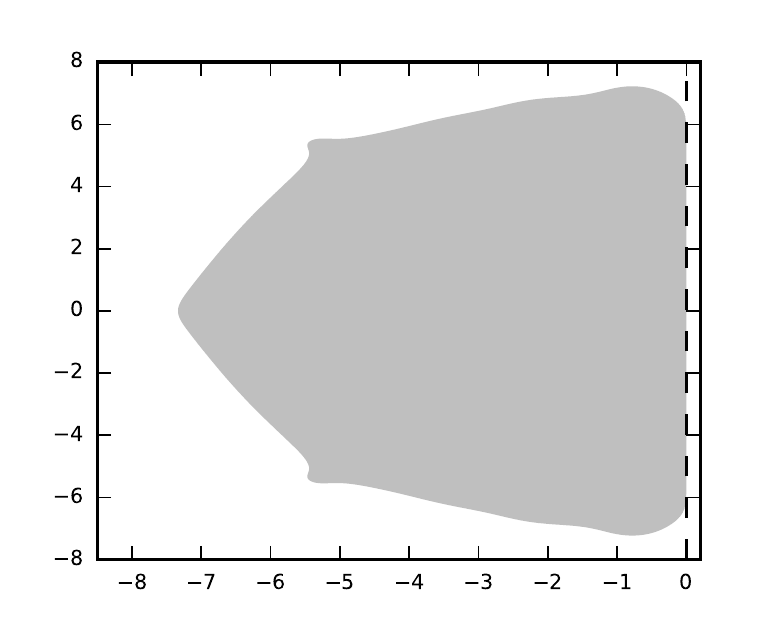}
    \end{minipage}
    
    \begin{center}
    \caption{\label{fig:StabilityExSDG}
    Stability regions (grey) for $ExSDG_p^K$ with $p = 4$ and $p = 8$.  First row: regular convergence ($K = p$); second row: superconvergence ($K = 2p$) 
}
     \end{center}
\end{figure}

\begin{figure}[!ht]  
    \centering
    %\begin{minipage}[c]{0.48\textwidth}
    %\includegraphics[width=1.\textwidth]{figures/Dahlquists1_p4.pdf}
    %\end{minipage}
    \begin{minipage}[c]{0.48\textwidth}
      \centerline{$\quad\,\, \text{Runge-Kutta 3}$}
    %\vspace{-.05cm}
    \includegraphics[width=1.\textwidth]{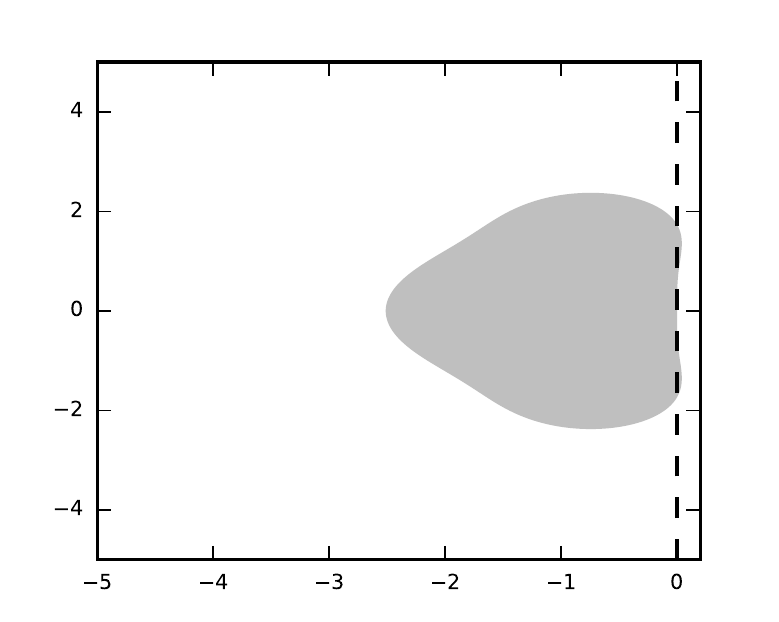}
    \end{minipage}
    \begin{minipage}[c]{0.48\textwidth}
      \centerline{$\quad\,\, \text{Runge-Kutta 4}$}
    %\vspace{-.05cm}
        \includegraphics[width=1.\textwidth]{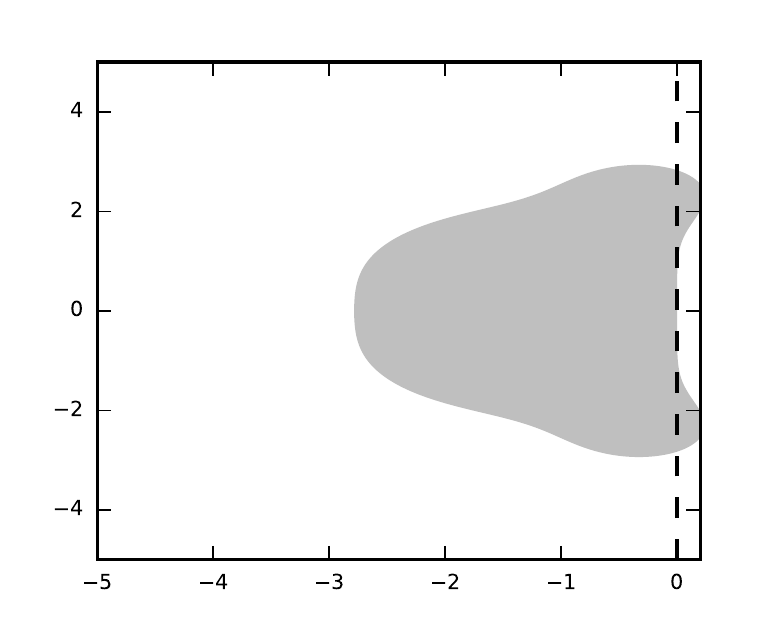}
    \end{minipage}

    \begin{center}
    \caption{\label{fig:StabilityERK}
    Stability regions (grey) for explicit third and fourth order Runge-Kutta methods.
}
     \end{center}
\end{figure}

Regarding stiff problems, we need solve them with the $ImSDG_p^K$ methods.  In Figure~\ref{fig:StabilityImSDG}, we see that for both regular convergence ($K = p$) and superconvergence ($K = 2p$) cases, whenever $\operatorname{Re}(\lambda) < 0$ the scheme is stable. Based on our test, for $0 \leq p \leq 9$, the $ImSDG_p^K$ methods are always $A$-stable.  Although we do not have a proof yet, we expect that all of the $ImSDG_p^K$ methods are $A$-stable.

\begin{figure}[!ht] 
    \centering
    %\begin{minipage}[c]{0.48\textwidth}
    %\includegraphics[width=1.\textwidth]{figures/Dahlquists1_p4.pdf}
    %\end{minipage}
    \begin{flushleft}
      Regular Convergence ($K = p$)
    \end{flushleft}
    \begin{minipage}[c]{0.48\textwidth}
      \centerline{$\quad\,\, p = 4$}
    \includegraphics[width=1.\textwidth]{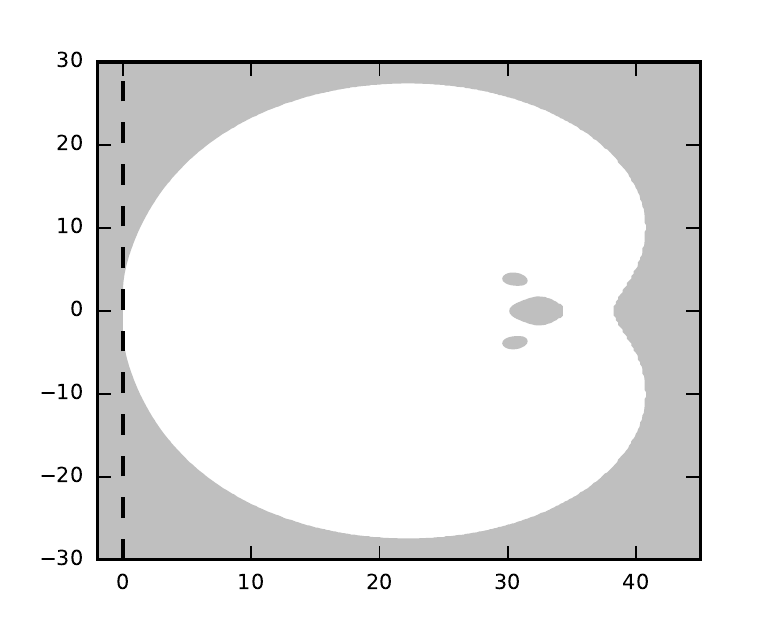}
    \end{minipage}
    \begin{minipage}[c]{0.48\textwidth}
      \centerline{$\quad\,\, p = 8$}
    \includegraphics[width=1.\textwidth]{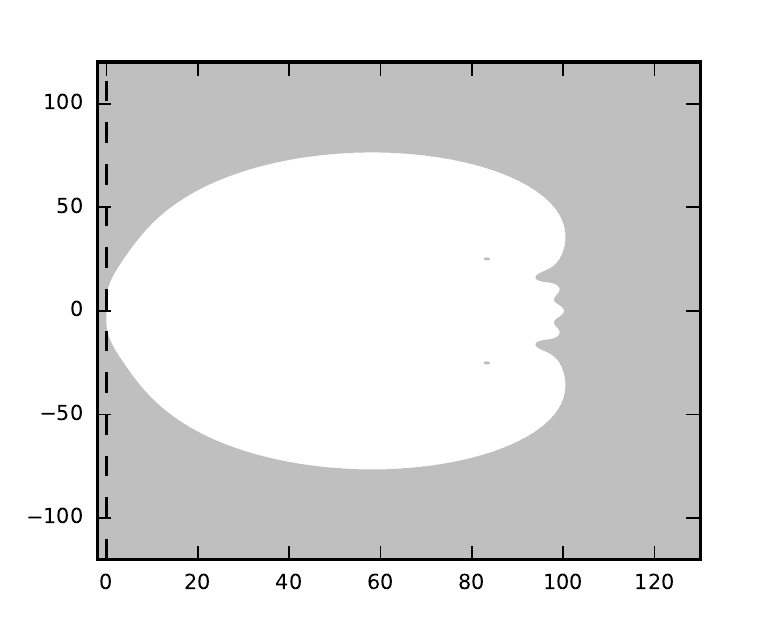}
    \end{minipage}

    \begin{flushleft}
      Superconvergence ($K = 2p$)
    \end{flushleft}
    \begin{minipage}[c]{0.48\textwidth}
      \centerline{$\quad\,\, p = 4$}
    \includegraphics[width=1.\textwidth]{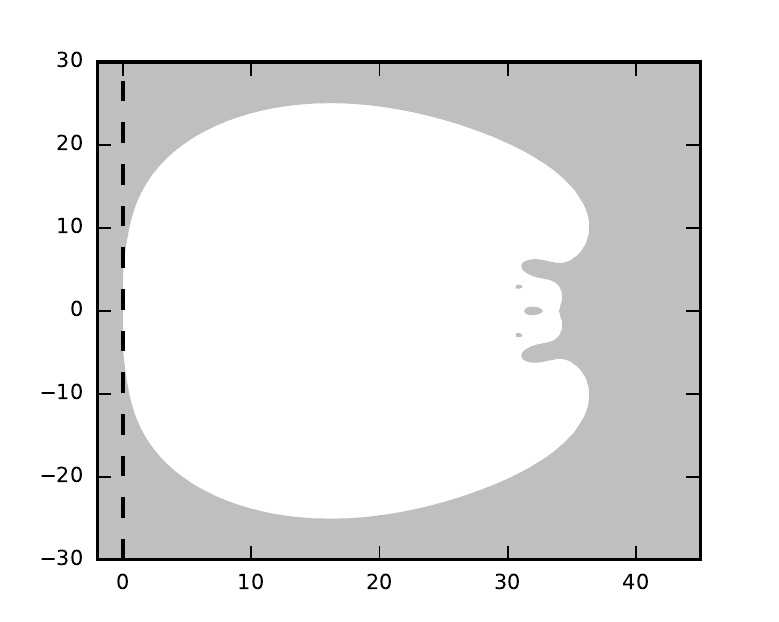}
    \end{minipage}
    \begin{minipage}[c]{0.48\textwidth}
      \centerline{$\quad\,\, p = 8$}
    \includegraphics[width=1.\textwidth]{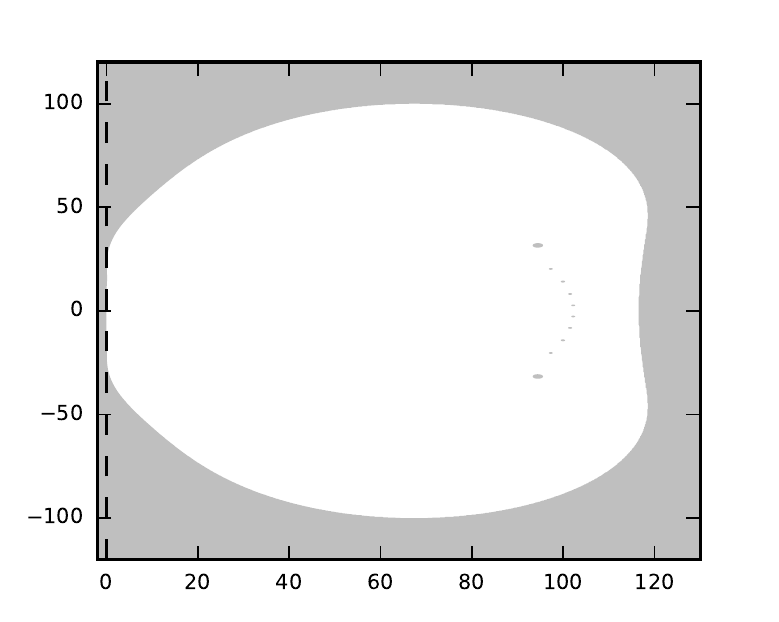}
    \end{minipage}
    
    \begin{center}
    \caption{\label{fig:StabilityImSDG}
    Stability regions (grey) for $ImSDG_p^K$ with $p = 4$ and $p = 8$.  First row: regular convergence ($K = p$); second row: superconvergence ($K = 2p$) 
}
     \end{center}
\end{figure}

For very stiff problems, one may want to use the so-called $L$-stable schemes, see~\cite{Dutt:2000} such that 
\[
    Am(\lambda) \to 0, \quad \text{if} \quad \operatorname{Re}(\lambda) \to -\infty. 
\]
One easiest way to construct a $L$-stable SDG scheme is to introduce a $\theta$ method $ImSDG_p^K(\theta)$ based on $ImSDG_p^K$: 

\noindent{\bf Implicit SDG $\theta$-Scheme ($ImSDG_p^K(\theta)$):}
\begin{align}  
    \label{eq:SDGtheta-Im-0}
    u^{K}_{n,0} & = u_n + \theta\frac{\Delta t_n}{2}\omega_0\left(f(u^{K}_{n,0}) - f(u^{K-1}_{n,0})\right)  + \frac{\Delta t_n}{2} \sum\limits_{j=0}^p \tilde{l}_{0j} \omega_j f(u^{K-1}_{n,j})  \\
    u^{K}_{n,m+1} & = u^{K}_{n,m} + \theta\frac{\Delta t_n}{2}\omega_{m+1}\left(f(u^{K}_{n,m+1}) - f(u^{K-1}_{n,m+1})\right) \notag \\
    \label{eq:SDGtheta-Im-m}
    &\qquad\qquad + \frac{\Delta t_n}{2} \sum\limits_{j=0}^p \tilde{l}_{m+1,j} \omega_j f(u^{K-1}_{n,j}), \qquad 0 \leq m \leq p-1. 
\end{align}
where the $U^0,\dots,U^{K-1}$ are obtained by using the $ImSDG_p^K$ method.

Then, same as~\cite{Dutt:2000, Xia:2007}, a $L$-stable scheme can be constructed by combining the $ImSDG_p^K(\theta)$ and $ImSDG_p^K$ schemes.

\section{Multilevel and Adaptive Strategy}
\subsection{A Multilevel SDG Methods} 
In the previous section, we introduced a new class of iterative schemes based on the DG time stepping methods.  Furthermore, in this section, we develop a multigrid method for the time domain which is using our SDG schemes as smoother.  Due to the nature of the DG approximation space~\eqref{eq:space},  the coarse ``grids'' can be constructed by reducing the degree $p$ of the approximating basis functions ($p$-multigrid).  A similar multilevel approach with the SDC methods can be found in~\cite{Emmett:2012,Speck:2015}.

As described in~\eqref{eq:preDG}, the SDG schemes are iterative methods, which are applied to the DG weak formula~\eqref{eq:weak}. For convenience we rewrite \eqref{eq:non2} as  
\[
  U + \Delta t L^{-1}F(U) + L^{-1}B = 0,
\]
with $\Delta t = \frac{\Delta t_n}{2}$ and $F = F_\omega$.  

In order to derive the multilevel algorithm, in this paper, we use the full approximation scheme (FAS) to treat the nonlinearity directly.  For details of multigird methods and FAS correction technique, see \cite{Briggs:2000}.  We define levels $\ell = 1,\dots,L$, where $\ell=1$ is the finest level $V^p_h$.  Also, we define the operator $A_\ell$ on the $\ell$-th level, as
\[
  A_\ell(U_\ell) \equiv U_\ell + \Delta t L^{-1}_\ell F_{\ell} (U_\ell). 
\]
Then, the FAS correction for level $\ell+1$ is given by
\[
  \tau_{\ell+1} = A_{\ell+1}(I_\ell^{\ell+1}(U_\ell)) - I_\ell^{\ell+1}A_\ell(U_\ell) 
  = \Delta t \left(L^{-1}_{\ell+1} F_{\ell+1}(I_\ell^{\ell+1}U_\ell) - I_\ell^{\ell+1}L^{-1}_\ell F_{\ell}(U_\ell)\right).
\]
However, if on level $\ell$ the equation is already corrected by $\tau_\ell$ with 
\[
  A_\ell(U_\ell) = U_\ell + \Delta t L^{-1}_\ell F_{\ell} - \tau_\ell,
\]
then 
\begin{align*}
  \tau_{\ell+1} & = A_{\ell+1}(I_\ell^{\ell+1}(U_\ell)) - I_\ell^{\ell+1}A_\ell(U_\ell)  \\
  & = \Delta t \left(L^{-1}_{\ell+1} F_{\ell+1}(I_\ell^{\ell+1}U_\ell) - I_\ell^{\ell+1}L^{-1}_\ell  F_{\ell}(U_\ell)\right) + I_\ell^{\ell+1}\tau_\ell.
\end{align*}
On level $\ell$, the corrected weak formula is 
\[
  U_\ell + \Delta t L^{-1}_\ell F_\ell(U) + L^{-1}_\ell B_\ell - \tau_\ell = 0,
\]
which can be solved by the SDG methods in the same way.

Algorithm~\ref{algorithm} describes one multilevel SDG iteration.  It is worth noting that in Algorithm~\ref{algorithm} the word ``$\text{SDG\_Sweep}$'' is used for one SDG iteration, which can originate from the explicit, implicit, or semi-implicit scheme.  Also, the operators $L^{-1}_l$ can be precomputed and stored. 

\begin{algorithm}
  \caption{Multilevel SDG} \label{algorithm}
  %\SetKwComment
  \DontPrintSemicolon
  \KwData{Upwind flux $U_{1,0}$ from the previous time step, values $U^k_1$ and $F_1^k = F_1(U_1^k)$ from the previous iteration, on the finest level $\ell = 1$.}
  \KwResult{Solution $U^{k+1}_1$.}
  \hfill\\
  \tcp{Perform a fine level sweep using SDG}
  $U_1^{k+1}, F_1^{k+1} \longleftarrow \text{SDG\_Sweep}(U_{1,0}, U^{k}_1, F^k_1)$\; 
  \hfill\\
  \tcp{Cycle from fine to coarse}
  \For{$\ell = 1 \ldots L-1$}{
    $U^k_{\ell+1} \longleftarrow I_\ell^{\ell+1}(U^{k+1}_\ell)$ \tcp*[r]{restrict}
    $F^k_{\ell+1} \longleftarrow F_{\ell+1}(U^k_{\ell+1})$ \tcp*[r]{evaluation of $F$}
    %$\tilde{U}^k_{\ell+1} \longleftarrow U^k_{\ell+1}$  \tcp*[r]{save restriction}
    \hfill\\
    \tcp{FAS correction and sweep}
    $\tau_{\ell+1} \longleftarrow \text{FAS}(F^{k+1}_\ell, F^k_{\ell+1}, \tau_\ell)$\;
    $U^{k+1}_{\ell+1}, F^{k+1}_{\ell+1} \longleftarrow \text{SDG\_Sweep}(U^k_{1,0^{-}}, U^k_{\ell+1}, F_{\ell+1}^k, \tau_{\ell+1})$\;
  }
  \hfill\\
  \tcp{Cycle from coarse to fine}
  \For{$\ell = L-1 \ldots 2$}{
    $U^{k+1}_\ell \longleftarrow U^{k+1}_\ell + I_{\ell+1}^{\ell}(U^{k+1}_{\ell+1} - U^k_{\ell+1})$ \tcp*[r]{interpolate coarse level correction}
    %$U^{k+1}_\ell \longleftarrow U^{k+1}_\ell + I_{\ell+1}^{\ell}(U^{k+1}_{\ell+1} - \tilde{U}^k_{\ell+1})$ \tcp*[r]{interpolate coarse correction}
    $F^{k+1}_\ell \longleftarrow F_{\ell}(U^k_{\ell})$ \tcp*[r]{evaluation of $F$}
    $U^{k+1}_{\ell}, F^{k+1}_\ell \longleftarrow \text{SDG\_Sweep}(U^k_{1,0^{-}}, U^{k+1}_\ell, F^{k+1}_\ell, \tau_\ell)$\;
  }
  \hfill\\
  \tcp{Return to finest level}
  $U^{k+1}_1 \longleftarrow U^{k+1}_1 + I_{2}^{1}(U^{k+1}_{2} - \tilde{U}^k_2)$\;
  $F^{k+1}_1 \longleftarrow F_{1}(U^{k+1}_{1})$\;
\end{algorithm}

To compare the Multilevel SDG method with one-level SDG, we apply them for the Dahlquist's test problem~\eqref{eq:Dahlquist} with (large) negative eigenvalues, i.e.\ the stiff case.  For $\lambda = -10$, we show the results of using two-level and three-level implicit SDG methods in Figures~\ref{fig:MLSDG2} and~\ref{fig:MLSDG3}, respectively.  Compared to the one-level version, both the two- and three-level methods can reduce the number of iterations required to converge to the standard DG approximation.  We note that the convergence speed is also affected by the choices of different orders ($p_i$) and the time step size ($\Delta t$).  Since the main purpose of this paper is to introduce a new class of time stepping schemes -- the SDG schemes, we note that the further studies and applications of the multilevel technique will be studied in a forthcoming paper. 

\begin{rem}
  %The advantages of multilevel SDG methods is not glaring compared to one level SDG.
  Multilevel SDG method can be combined with the method of lines to solve PDEs.  Moreover, it can be coupled with a spatial multigrid to construct a space-time multigrid framework for high-order methods.  Furthermore, due to its structure, the multilevel method is also a competitive and promising candidate for parallel in time algorithms, such as Parareal or PFASST~\cite{Emmett:2012}.  
\end{rem}

\begin{figure}[!ht] 
    \centering
    %\begin{minipage}[c]{0.48\textwidth}
    %\includegraphics[width=1.\textwidth]{figures/Dahlquists1_p4.pdf}
    %\end{minipage}
    \begin{minipage}[c]{0.48\textwidth}
      \centerline{$\qquad\,\, p = 6,\,\,\Delta t = 0.2$}
    \includegraphics[width=1.\textwidth]{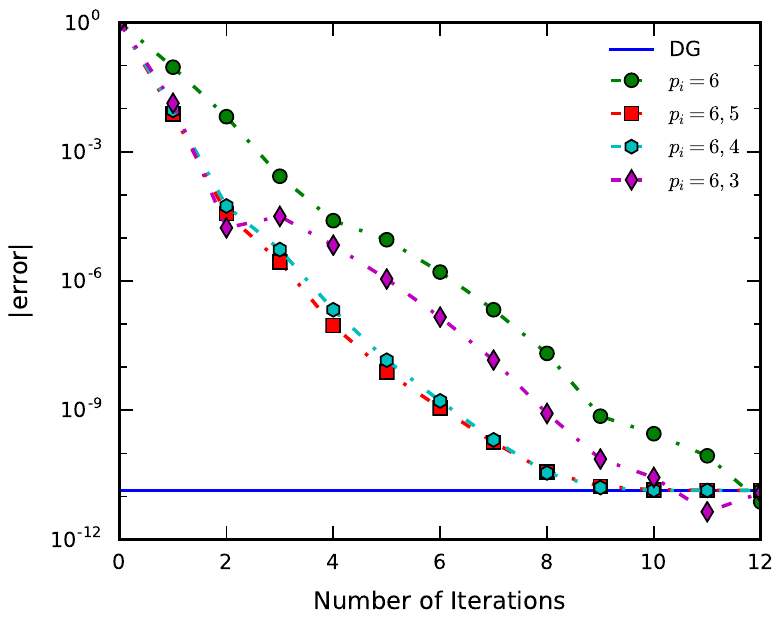}
    \end{minipage}
    \begin{minipage}[c]{0.48\textwidth}
      \centerline{$\qquad\,\, p = 9,\,\,\Delta t = 0.5$}
    \includegraphics[width=1.\textwidth]{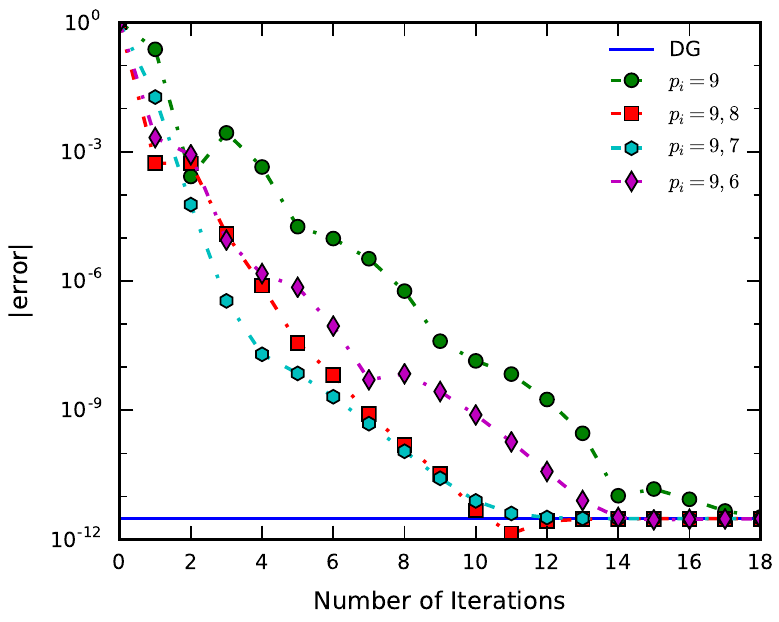}
    \end{minipage}
    
    \begin{center}
    \caption{\label{fig:MLSDG2}
    Errors for Dahlquist's test problem~\eqref{eq:Dahlquist} with $\lambda = -10$ computed using the two-level implicit SDG methods. The result of using the one-level implicit SDG method are marked with green circles.   
}
     \end{center}
\end{figure}

\begin{figure}[!ht] 
    \centering
    %\begin{minipage}[c]{0.48\textwidth}
    %\includegraphics[width=1.\textwidth]{figures/Dahlquists1_p4.pdf}
    %\end{minipage}
    \begin{minipage}[c]{0.48\textwidth}
      \centerline{$\qquad\,\, p = 6,\,\,\Delta t = 0.2$}
    \includegraphics[width=1.\textwidth]{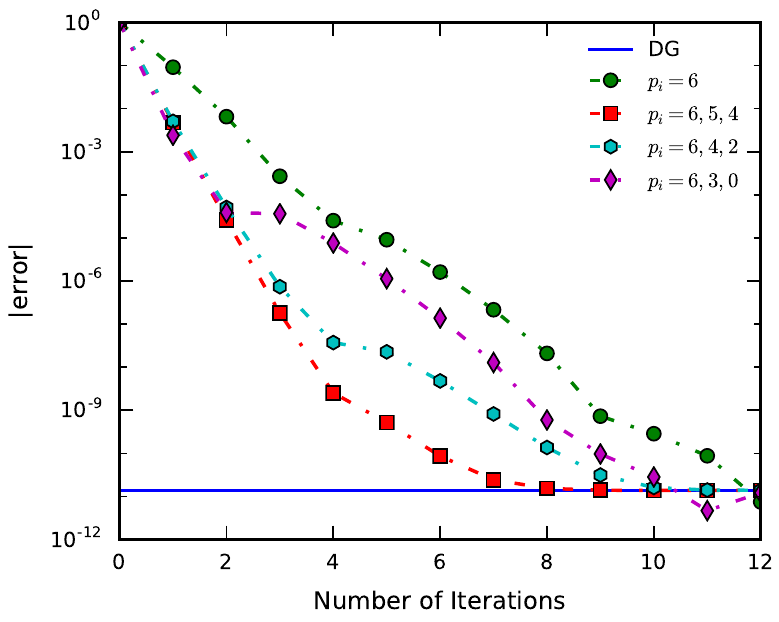}
    \end{minipage}
    \begin{minipage}[c]{0.48\textwidth}
      \centerline{$\qquad\,\, p = 9,\,\,\Delta t = 0.5$}
    \includegraphics[width=1.\textwidth]{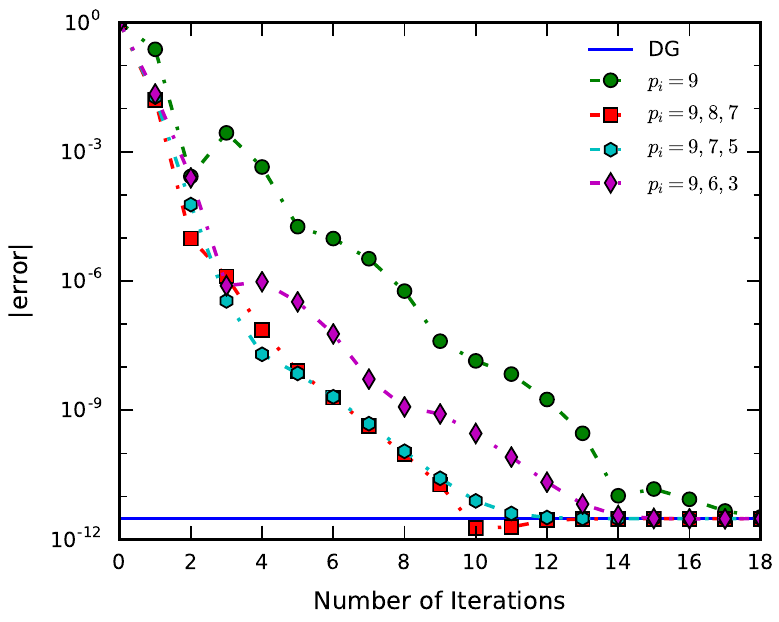}
    \end{minipage}
    
    \begin{center}
    \caption{\label{fig:MLSDG3}
    Errors for Dahlquist's test problem~\eqref{eq:Dahlquist} with $\lambda = -10$ computed using the three-level implicit SDG methods.  The result of using the one-level implicit SDG method are marked with green circles.
}
     \end{center}
\end{figure}

\subsection{Adaptive Strategy}
Adaptive step size control plays an important role in practical applications.  Our newly proposed SDG method, which essentially is a one-step scheme and an ideal candidate for adaptive implementation.  The adaptive strategy can be based on established techniques, such as local grid refinement ($h$-refinement) or the local choice of the order ($p$-adaptivity), see, e.g~\cite{Bottcher:1996}.   We note that the detailed studies and applications of adaptive implementation and accuracy control will be presented in a forthcoming paper.

%With respect to accuracy control, the structure of the SDG methods and the fact that the solution is an approximation of the standard DG solution allow us to define different error indicators.  For example, we list two indicators as follows:
%\begin{itemize}
  %\item Control the residual (differential form)
    %\[
      %\mathcal{R}(t_{n+1}) = \left(\frac{d u_h(t)}{dt} - f(t, u_h(t))\right) \bigg\vert_{t_{n+1}^{-}}, \qquad \left|\mathcal{R}(t_{n+1})\right| < \varepsilon.
    %\]
  %\item Control the residual (integration form)
    %\[
      %\mathcal{R}(t_{n+1}) = u_h(t_{n+1}^{-}) - u_h(t_{n}^{-}) - \int_{t_n}^{t_{n+1}} f(t, u_h) \,dt, \qquad \left|\mathcal{R}(t_{n+1})\right| < \varepsilon.
    %\]
  %\item Control the residual (weak form)
    %\begin{equation}
      %\label{eq:cond1}
      %\mathcal{R}(t_{n+1}) = LU_h + \frac{\Delta t_n}{2} F_\omega(U_h) + B, \qquad \left|\mathcal{R}(t_{n+1})\right| < \varepsilon.
    %\end{equation}
  %\item Control the difference of two iterations,
    %\begin{equation}
      %\label{eq:cond2}
      %\left|u_h^{K}(t_{n+1}^{-}) - u_h^{K-1}(t_{n+1}^{-})\right| = \left|u_{n,p}^K - u_{n,p}^{K-1}\right| < \varepsilon.
    %\end{equation}
%\end{itemize}
%We note that both indicators can be easily evaluated.  Formula~\eqref{eq:cond1} is suitable for step size $h$ control, on the other hand formula~\eqref{eq:cond1} is easier for order $p$ control.  

\section{Numerical experiments}
In this section, we numerically validate the performance of the proposed SDG schemes with three examples.  

\subsection{Van der Pol Equation}
The first example is the Van der Pol Equation,
\begin{align} 
  \label{eq:van}
  u' & = v,\\
  v' & = (-u + (1 - {u}^2)v)/\epsilon,\notag
\end{align}
with initial values $u(0) = 2$ and $v(0) = −0.6666654321121172$ based on~\cite{Minion:2003,Zhao:2014}.  The Van der Pol equation is a well-known benchmark example for studying stiff ODE problems.  In this example, we solving it with the semi-implicit SDG methods:  the first equation is treated implicitly, and the second one explicitly.  

%For stiff case, $\epsilon = 10^{-1}$ and $\epsilon = 10^{-3}$, the solution is only calculated to time $T = 0.5$ with fixed time step size.  To compute the numerical error of this problem, the reference solutions are computed using $p=9$ on a very fine mesh ($\Delta t = 10^{-6}$). 

In this example, we only investigate the accuracy of the semi-implicit SDG scheme for solving the Van der Pol equation.  The maximum global errors are reported for a slightly stiff parameter $\epsilon = 10^{-1}$ in Table~\ref{table:s-1} and a stiff parameter $\epsilon = 10^{-3}$ in Table~\ref{table:s-3} with a short stopping time $T = 0.5$ are employed for this purpose.  Here, the reference solutions are computed using the high order semi-implicit SDG scheme ($p = 9$) on a very fine mesh ($\Delta t = 10^{-6}$).  For the slightly stiff case ($\epsilon = 10^{-1}$), Table~\ref{table:s-1}, the full accuracy order of $2p+1$ is observed.  For the stiff case ($\epsilon = 10^{-3}$), in Table~\ref{table:s-3} a slight order reduction from $2p+1$ is observed, but the error and accuracy order are still better than a standard method of order $p+1$.     

%\begin{table}\centering
%%\ra{1.3}
%\begin{tabular}{@{}cccccccc@{}}\toprule
  %& & & \multicolumn{2}{c}{$u$} & \phantom{abc}& \multicolumn{2}{c}{$v$} \\ 
%\cmidrule{4-5} \cmidrule{7-8}
%Degree & $\Delta t$ & & Error & Order && Error & Order \\ 
%\midrule
        %& 2.50E-01 && 2.10E-07 &  --   && 2.28E-07 &  -- \\ 
%$p = 3$ & 1.25E-01 && 1.67E-09 &  6.98 && 2.27E-09 & 6.65\\ 
        %& 6.25E-02 && 1.36E-11 &  6.94 && 1.77E-11 & 7.00\\ 
%\midrule
        %& 2.50E-01 && 1.36E-09 &  --   && 6.02E-16 &  --  \\ 
%$p = 4$ & 1.25E-01 && 3.10E-12 &  8.77 && 4.10E-12 & 7.20\\ 
        %& 6.25E-02 && 6.15E-15 &  8.98 && 9.28E-15 & 8.79\\ 
%\midrule
        %& 2.50E-01 && 2.14E-11 &  --   && 3.03E-11 &  --  \\ 
%$p = 5$ & 1.25E-01 && 6.28E-15 & 11.74 && 8.67E-15 & 11.77 \\ 
        %& 6.25E-02 && 3.23E-18 & 10.92 && 5.24E-18 & 10.69 \\ 
%\bottomrule
%\end{tabular}
%\caption{Convergence tests for the Van der Pol equation~\eqref{eq:van}. Here, initial values $u(0) = 2$ and $v(0) = \frac{2}{3}$ with $t = 5$ and $\epsilon = 1$.}
%\label{table:s0}
%\end{table}

\begin{table}\centering 
%\ra{1.3}
\begin{tabular}{@{}cccccccc@{}}\toprule
  & & & \multicolumn{2}{c}{$u$} & \phantom{abc}& \multicolumn{2}{c}{$v$} \\ 
\cmidrule{4-5} \cmidrule{7-8}
Degree & $\Delta t$ & & Error & Order && Error & Order \\ 
\midrule
        & 2.50E-02 && 4.51E-14 &  --   && 3.91E-10 &  -- \\ 
$p = 3$ & 1.25E-02 && 3.47E-16 &  7.02 && 3.17E-12 & 6.95\\ 
        & 6.25E-03 && 2.69E-18 &  7.01 && 2.57E-14 & 6.95\\ 
\midrule
        & 2.50E-02 && 2.41E-14 &  --   && 3.13E-16 &  --  \\ 
$p = 4$ & 1.25E-02 && 4.84E-17 &  8.96 && 6.16E-19 & 8.99\\ 
        & 6.25E-03 && 9.77E-20 &  8.95 && 1.21E-21 & 8.99\\ 
\midrule
        & 2.50E-02 && 3.70E-20 &  --   && 8.15E-16 &  --  \\ 
$p = 5$ & 1.25E-02 && 2.03E-23 & 10.83 && 4.08E-19 & 10.96 \\ 
        & 6.25E-03 && 1.98E-26 & 10.00 && 2.03E-22 & 10.97 \\ 
\bottomrule
\end{tabular}
\caption{Convergence tests for the Van der Pol equation~\eqref{eq:van}. Here, $t = 0.5$ and $\epsilon = 10^{-1}$.}
\label{table:s-1}
\end{table}

\begin{table}\centering 
%\ra{1.3}
\begin{tabular}{@{}cccccccc@{}}\toprule
  & & & \multicolumn{2}{c}{$u$} & \phantom{abc}& \multicolumn{2}{c}{$v$} \\ 
\cmidrule{4-5} \cmidrule{7-8}
Degree & $\Delta t$ & & Error & Order && Error & Order \\ 
\midrule
        & 2.50E-03 && 3.57E-11 &  --   && 1.09E-06 &  -- \\ 
$p = 3$ & 1.25E-03 && 2.42E-13 &  7.21 && 8.08E-08 & 3.75\\ 
        & 6.25E-04 && 8.86E-15 &  4.77 && 1.79E-09 & 5.49\\ 
\midrule
        & 2.50E-03 && 5.33E-11 &  --   && 5.56E-13 &  --  \\ 
$p = 4$ & 1.25E-03 && 1.14E-12 &  5.55 && 1.47E-14 & 5.24\\ 
        & 6.25E-04 && 6.64E-15 &  7.42 && 5.95E-17 & 7.95\\ 
\midrule
        & 2.50E-03 && 4.44E-15 &  --   && 1.66E-08 &  --  \\ 
$p = 5$ & 1.25E-03 && 1.38E-17 &  8.33 && 9.87E-11 & 7.40 \\ 
        & 6.25E-04 && 6.38E-21 & 11.08 && 1.47E-13 & 9.39 \\ 
\bottomrule
\end{tabular}
\caption{Convergence tests for the Van der Pol equation~\eqref{eq:van}. Here, $t = 0.5$ and $\epsilon = 10^{-3}$.}
\label{table:s-3}
\end{table}

\subsection{A ``bad'' Example}
The second example is a ``bad'' example taken from~\cite{Iserles:2009},
\begin{equation}
  \label{eq:bad}
  y' = \ln3 \left( y - \lfloor y \rfloor - \frac{3}{2}\right),
\end{equation}
with $y(0) = 0$.  It is easy to verify that, the exact solution of this example~\eqref{eq:bad} is 
\[
  y(t) = -\lfloor y \rfloor + \frac{1}{2}\left(1 - 3^{t-\lfloor t \rfloor}\right),\qquad t \geq 0,
\]
where $\lfloor y \rfloor$ is the integer part of $y \in \mathbb{R}$.  Since the right-hand side function of~\eqref{eq:bad} does not satisfy the Lipschitz condition, the standard time-stepping methods do not perform well.  Here, we simply compare the new proposed explicit SDG method ($p=3$) with the classic $4$th order explicit Runge-Kutta method.  We observe that in Figure~\ref{fig:bad}, although the error decreases with $h$, the rate of global error decay for the Runge-Kutta method is just $\mathcal{O}(h)$.  The Runge-Kutta method performs badly as people may expect, and the source of error is the integer points where the function fails the Lipschitz condition (or smoothness requirement).  On the other sided, the SDG methods inherit the flexibility of dealing discontinuous from the DG methods; we can clearly see that it maintains the global error decay rate of $\mathcal{O}(h^7)$ as expected form the numerical analysis. 
\begin{rem}
  Here, we note that in this example, for both methods, we keep all discontinuities are located at the mesh points. This is important to maintain the optimal order of accuracy. In practice, we need to use an approximation of the location of discontinuities and adaptive strategies.  Otherwise, the rate of global error will deduce to $\mathcal{O}(h)$ for the SDG methods too. 
\end{rem}
\begin{figure}[!ht]
    \centering
    \begin{minipage}[c]{0.05\textwidth}
        \begin{sideways}
          \centerline{$\,$}
        \end{sideways}
    \end{minipage}
    \begin{minipage}[c]{0.45\textwidth}
      \centerline{$\qquad\quad \text{Runge-Kutta}$}
    \end{minipage}
    \begin{minipage}[c]{0.45\textwidth}
      \centerline{$\qquad\quad \text{SDG}$}
    \end{minipage}
    %\vspace{-0.5cm}
    \begin{minipage}[c]{0.05\textwidth}
        \begin{sideways}
          \centerline{$\Delta t = 0.2$}
        \end{sideways}
    \end{minipage}
    \begin{minipage}[c]{0.45\textwidth}
    \includegraphics[width=1.\textwidth]{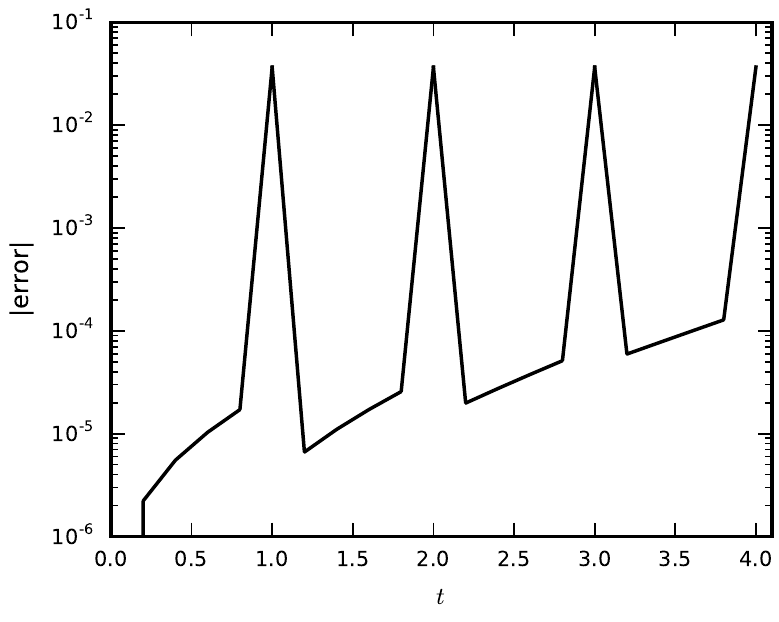}
    \end{minipage}
    \begin{minipage}[c]{0.45\textwidth}
    \includegraphics[width=1.\textwidth]{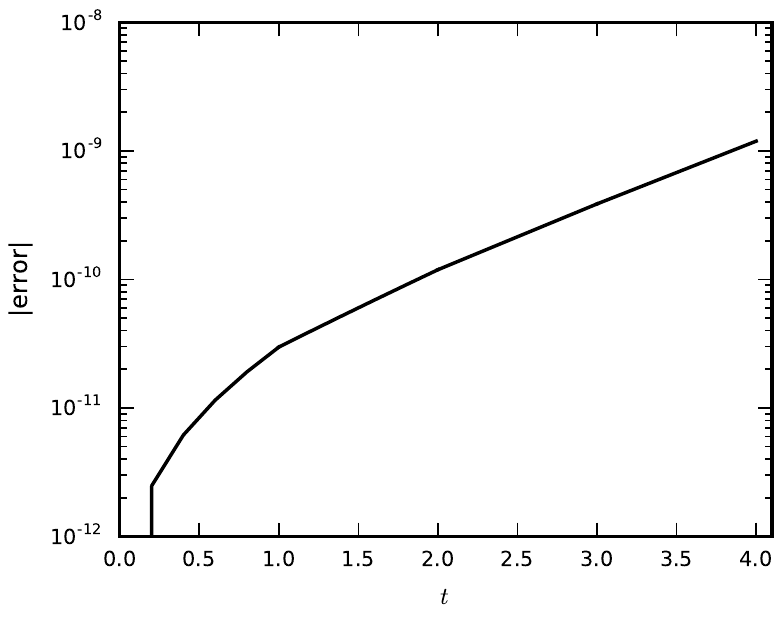}
    \end{minipage}
    
    %\vspace{-0.4cm}
    \begin{minipage}[c]{0.05\textwidth}
        \begin{sideways}
          \centerline{$\Delta t = 0.1$}
        \end{sideways}
    \end{minipage}
    \begin{minipage}[c]{0.45\textwidth}
    \includegraphics[width=1.\textwidth]{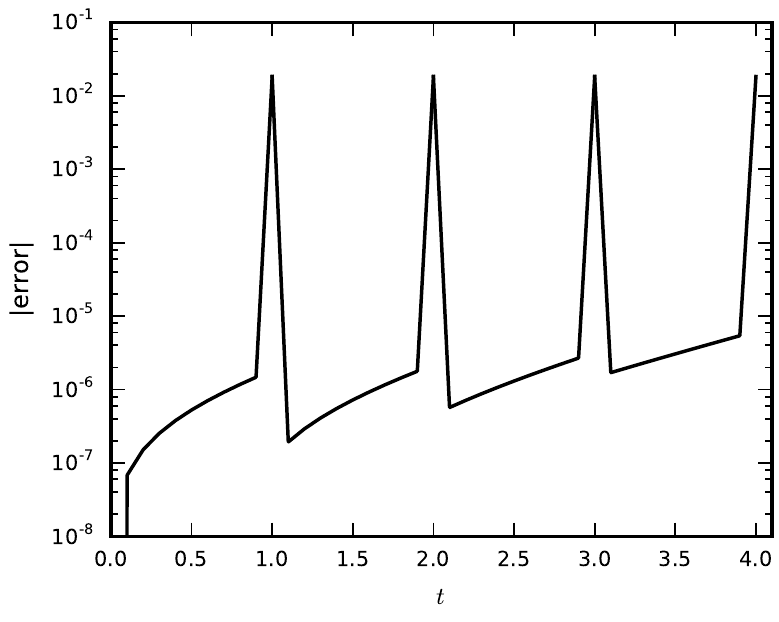}
    \end{minipage}
    \begin{minipage}[c]{0.45\textwidth}
    \includegraphics[width=1.\textwidth]{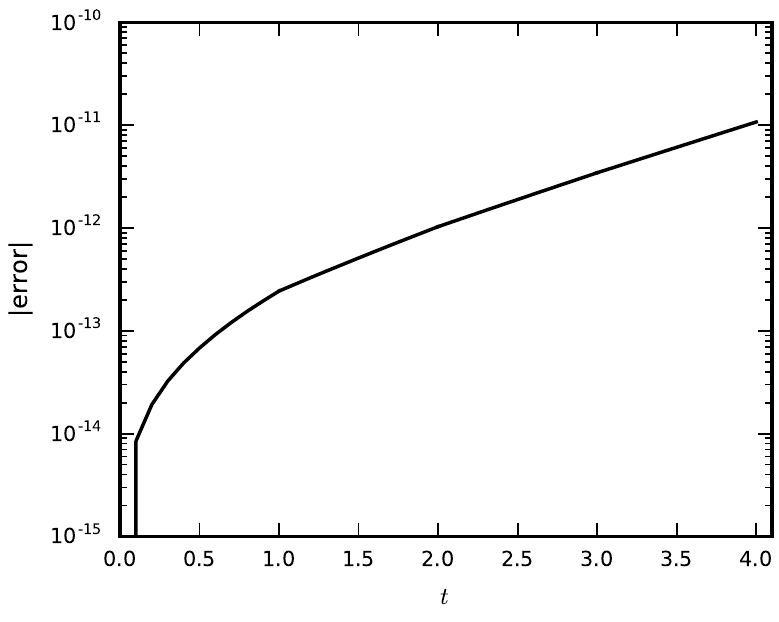}
    \end{minipage}

    \begin{center}
    \caption{\label{fig:bad}
        The errors for using the classic $4$th order explicit Runge-Kutta method (Left) and explicit SDG method with $p=3$ (right) for equation~\eqref{eq:bad}. The upper figures correspond to $\Delta t = 0.2$ and the lower to $\Delta t = 0.1$.
}
     \end{center}
\end{figure}

\subsection{Linear Convection Equation}
The third example is the application with method of lines approach for PDEs.  Here, we consider a linear scalar convection equation 
\begin{equation}
  \left\{\begin{array}{lc}
    u_t + u_x = 0, & (x,t) \in [0,1]\times[0,T] \\
    u(x,0) = \sin(2\pi x), & 
  \end{array}\right.
  \label{eq:convection}
\end{equation}
with periodic boundary conditions. The errors are computed at $T = 1$ which is one period in time.

For spatial discretization, we consider the technique used in~\cite{Cockburn:2003} that first uses the DG method to get the approximation, then a postprocessing technique is applied to enhance spatial accuracy.  Here, we use the polynomial basis of degree $4$ and $160$ elements for the spatial discretization.  In this case, the spatial accuracy in the $L^2$ norm has order $5$ for the DG approximation and order $9$ after postprocessing.  For the detail of this postprocessing technique or say spatial superconvergence, we refer to~\cite{Bramble:1977, Cockburn:2003, Li:2015, Ryan:2015, Li:2016}.  For time discretization, since it is a convection problem with spatial accuracy order $9$, we simply use the $ExSDG_4^8$ method with the time step $\Delta t = \text{cfl}\cdot\Delta x$ where the CFL number is $0.1$.  We compare the SDG method with the third order Runge-Kutta method as in~\cite{Cockburn:2003}. We present the errors in the $L^2$-norm with different CFL numbers in Figure~\ref{fig:convection}.  In Figure~\ref{fig:convection}, we can see that the explicit SDG method has better stability than the Runge-Kutta method as the Runge-Kutta method is not stable for CFL number $0.1$.  More important, the Runge-Kutta method requires a much smaller CFL number in order to achieve the desired accuracy.  In Figure~\ref{fig:convection}, to achieve the same accuracy after postprocessing, the Runge-Kutta method has to use more than $5,000$ times smaller time step size compared to the explicit SDG method. In terms of computational time, using the Runge-Kutta method is more than $100$ times slower.  In addition, we provide Figure~\ref{fig:ordercfl} to demonstrate the relation between the achieved accuracy order and the CFL number.  We note that the further studies and applications of the SDG method with this postprocessing technique will be studied in a forthcoming paper.

\begin{figure}[!ht]
    \centering
    %\begin{minipage}[c]{0.05\textwidth}
        %\begin{sideways}
          %\centerline{$\,$}
        %\end{sideways}
    %\end{minipage}
    \begin{minipage}[c]{0.48\textwidth}
      \centerline{$\qquad\quad \text{Before Postprocessing}$}
    \end{minipage}
    \begin{minipage}[c]{0.48\textwidth}
      \centerline{$\qquad\quad \text{After Postprocessing}$}
    \end{minipage}
    \vspace{-0.5cm}
    \begin{minipage}[c]{0.48\textwidth}
    \includegraphics[width=1.\textwidth]{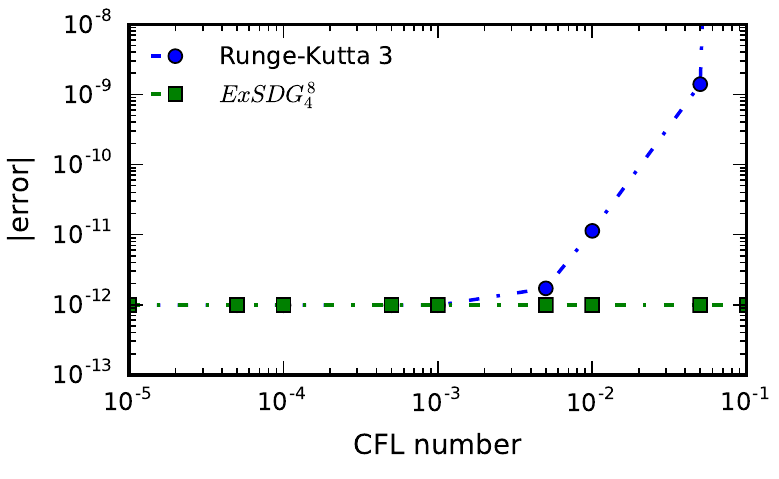}
    \end{minipage}
    \begin{minipage}[c]{0.48\textwidth}
    \includegraphics[width=1.\textwidth]{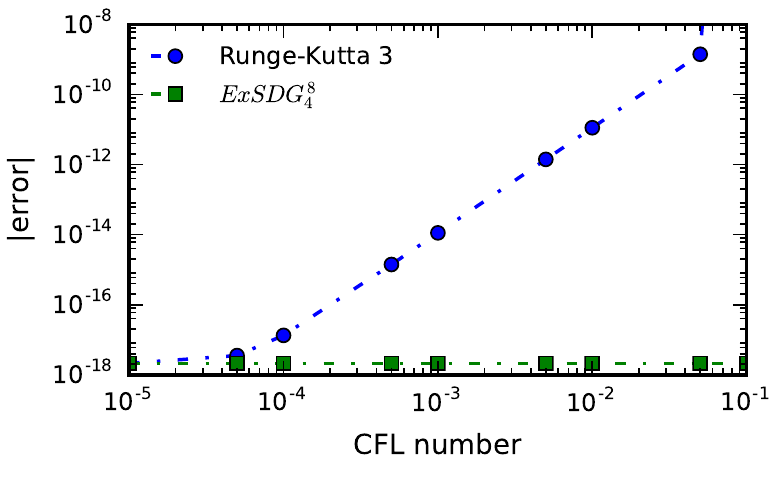}
    \end{minipage}
    
    \begin{center}
    \caption{\label{fig:convection}
        The error using the third order Runge-Kutta method and $ExSDG_4^8$ method with different CFL number for the linear convection equation~\eqref{eq:convection}. 
}
     \end{center}
\end{figure}

\begin{figure}[!ht]
    \centering
    \begin{minipage}[c]{0.48\textwidth}
      \centerline{$\qquad\quad \text{Before Postprocessing}$}
    \end{minipage}
    \begin{minipage}[c]{0.48\textwidth}
      \centerline{$\qquad\quad \text{After Postprocessing}$}
    \end{minipage}
    \vspace{-0.5cm}
    \begin{minipage}[c]{0.48\textwidth}
    \includegraphics[width=1.\textwidth]{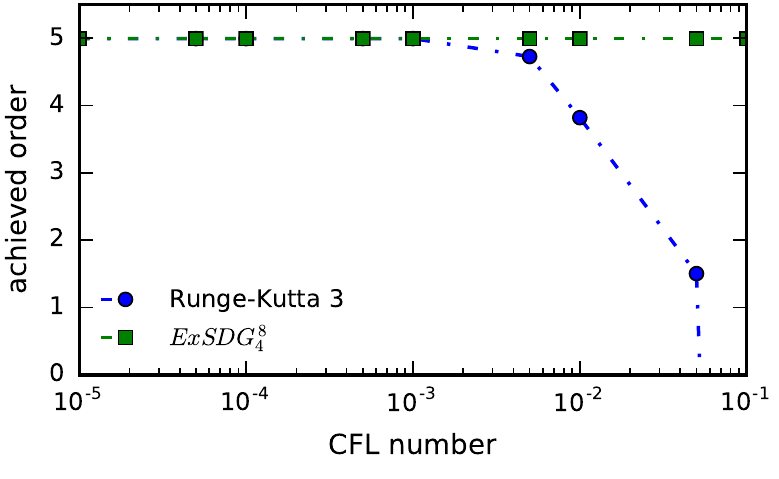}
    \end{minipage}
    \begin{minipage}[c]{0.48\textwidth}
    \includegraphics[width=1.\textwidth]{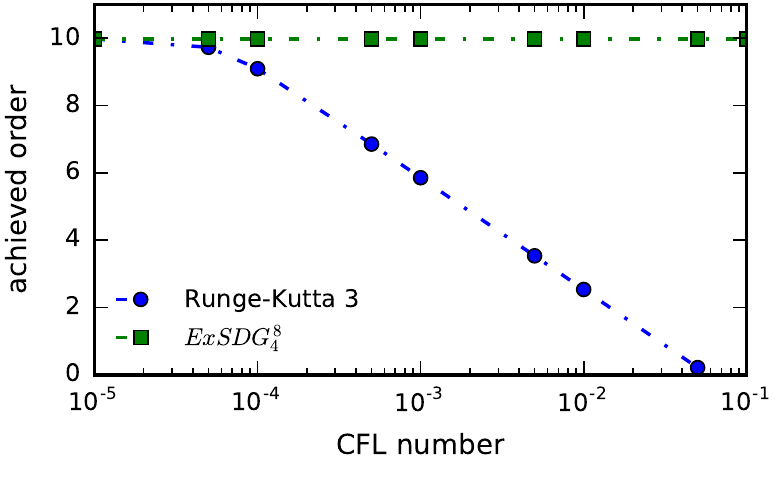}
    \end{minipage}
    
    \begin{center}
    \caption{\label{fig:ordercfl}
        The achieved accuracy order using the third order Runge-Kutta method and $ExSDG_4^8$ method with different CFL number for the linear convection equation~\eqref{eq:convection}.  Here, the accuracy order is computed based on the reference solutions solved on a uniform mesh with $20$ elements. 
}
     \end{center}
\end{figure}

\section{Conclusion}
In this paper, we introduce a new class of iterative methods (SDG) for ODEs based on the DG methods. Compared to the existed time integrators, we demonstrate that the proposed SDG schemes have several advantages:
\begin{itemize}
    \item The explicit, implicit and semi-implicit schemes can be systematically constructed for arbitrary order of accuracy. 
    \item Theoretically analysis shows that the SDG schemes have the accuracy order of $\min\{2p+1, K+1\}$ which preserves the superconvergence property of the DG methods.  
    \item For stability, the implicit SDG schemes are numerical explored to be $A$-stable for very high order schemes, while the explicit schemes have reasonable stability for non-stiff or mid-stiff problems.  In addition, the $L$-stable scheme can be easily constructed based the implicit scheme. 
    \item The SDG schemes can be easily combined with the method of lines approach to generate a framework for space-time discretizations.
    \item The SDG schemes can be naturally integrated with multilevel technique.  The multilevel SDG methods can be easily coupled with existed spatial multigrid methods to create space-time multigrid framework. 
    \item The schemes inherit the well studied $hp$-adaptive strategies from DG methods and other one-step time integrators.  Moreover, a space-time adaptive strategy can be constructed. 
    \item The discontinuous nature of the schemes give a more flexible structure to deal the difficulties raise from ``bad'' problems, such as discontinuities.
\end{itemize}
  
Besides the direct applications for ODEs and PDEs,  the SDG methods are also competitive and promising candidates for time parallel algorithms such as Parareal, PFASST, etc.  The application in a time parallel setting will be discussed in our upcoming work.

\section*{Acknowledgements}
We would like to thank Dr. Martin Weiser for fruitful discussions.
This work was supported by the Swiss Platform for Advanced Scientific Computing (PASC), under the project ``Integrative HPC Framework for Coupled Cardiac Simulations'', the Swiss National Science Foundation (SNF) and the Deutsche Forschungsgemeinschaft in the framework of the project “ExaSolvers - Extreme Scale Solvers for Coupled Systems, SNF project numbers 145271 and 162199, within the DFG-Priority Research Program 1684 ``SPPEXA- Software for Exascale Computing'', and the SCCER SoE (Swiss Competence Center for Energy Research - Supply of Energy).

\bibliography{ref}
\bibliographystyle{plain}

\end{document}